\newcommand{\bcen}{\begin{center}}     \newcommand{\ecen}{\end{center}}
\newcommand{\bay}{\begin{array}}      \newcommand{\eay}{\end{array}}
\newcommand{\beq}{\begin{eqnarray*}}      \newcommand{\eeq}{\end{eqnarray*}}
\def\gl{\mathrm{gl.dim}}
\def\hl{\mbox{\rm hl}}
\def\hr{\mbox{\rm hr}}
\def\hw{\mbox{\rm hw}}
\def\Hom{\mathrm{Hom}}
\def\rad{\mathrm{rad}}
\def\op{\mathrm{op}}
\def\Ext{\mathrm{Ext}}
\def\End{\mathrm{End}}
\def\dim{\mathrm{dim}}
\def\mod{\mathrm{mod}}
\def\Im{\mathrm{Im}}
\def\Ker{\mathrm{Ker}}
\def\proj{\mathrm{proj}}
\def\RHom{\mathrm{RHom}}
\begin{document}

\newtheorem{theorem}{Theorem}
\newtheorem{proposition}{Proposition}
\newtheorem{lemma}{Lemma}
\newtheorem{corollary}{Corollary}
\newtheorem{remark}{Remark}
\newtheorem{example}{Example}
\newtheorem{definition}{Definition}
\newtheorem*{conjecture}{Conjecture}
\newtheorem*{question}{Question}

\title{\large\bf Brauer-Thrall type theorems for derived module categories}

\author{\large Chao Zhang$^1$ and Yang Han$^2$}

\date{\footnotesize 1. Department of Mathematics, Guizhou University, Guiyang 550025, P.R.
China \\ E-mail: zhangc@amss.ac.cn \\ 2. KLMM, ISS, AMSS, Chinese
Academy of Sciences, Beijing 100190, P.R. China.\\
E-mail: hany@iss.ac.cn}

\maketitle

\begin{abstract} The numerical invariants
(global) cohomological length, (global) cohomological width, and
(global) cohomological range of a complex (an algebra) are
introduced. Cohomological range leads to the concepts of derived
bounded algebra and strongly derived unbounded algebra naturally.
The first and second Brauer-Thrall type theorems for the bounded
derived category of a finite-dimensional algebra over an
algebraically closed field are obtained. The first Brauer-Thrall
type theorem says that derived bounded algebras are just derived
finite algebras. The second Brauer-Thrall type theorem says that an
algebra is either derived discrete or strongly derived unbounded,
but not both. Moreover, piecewise hereditary algebras and derived
discrete algebras are characterized as the algebras of finite global
cohomological width and the algebras of finite global cohomological
length respectively.
\end{abstract}

\medskip

{\footnotesize {\bf Mathematics Subject Classification (2010)}:
16E35; 16G60; 16E05; 16G20}

\medskip

{\footnotesize {\bf Keywords} : derived category; indecomposable
object; derived finite algebra; derived discrete algebra; piecewise
hereditary algebra.}

\bigskip

\section{\large Introduction}

Throughout this paper, $k$ is an algebraically closed field, all
algebras are connected basic finite-dimensional associative
$k$-algebras with identity, and all modules are finite-dimensional
right modules, unless stated otherwise. One of the main topics in
representation theory of algebras is to study the classification and
distribution of indecomposable modules. In this aspect two famous
problems are Brauer-Thrall conjectures I and II:

\medskip

{\bf Brauer-Thrall conjecture I.} {\it The algebras of bounded
representation type are of finite representation type.}

\medskip

{\bf Brauer-Thrall conjecture II.} {\it The algebras of unbounded
representation type are of strongly unbounded representation type.}

\medskip

\noindent Here, we say an algebra is {\it of finite representation
type} or {\it representation-finite} if there are only finitely many
isomorphism classes of indecomposable modules. An algebra is said to
be {\it of bounded representation type} if the dimensions of all
indecomposable modules have a common upper bound, and {\it of
unbounded representation type} otherwise. We say an algebra is {\it
of strongly unbounded representation type} if there are infinitely
many $d \in \mathbb{N}$ such that for each $d$, there exist
infinitely many isomorphism classes of indecomposable modules of
dimension $d$. Brauer-Thrall conjectures I and II were formulated by
Jans \cite{Jan57}. Brauer-Thrall conjecture I was proved for
finite-dimensional algebras over an arbitrary field by Roiter
\cite{Ro68}, and artin algebras by Auslander \cite{Aus74}.
Brauer-Thrall conjecture II was proved for finite-dimensional
algebras over an infinite perfect field by Nazarova and Roiter using
matrix method \cite{NR75, Ro78}, and an algebraically closed field
by Bautista using geometric method \cite{Bau85}. Refer to
\cite{Rin80} for more on Brauer-Thrall conjectures.

Since Happel \cite{Hap87,Hap88}, the bounded derived categories of
finite-dimensional algebras have been studied widely. The study of
the classification and distribution of indecomposable objects in the
bounded derived category of an algebra is still an important theme
in representation theory of algebras. It is natural to consider the
derived versions of Brauer-Thrall conjectures. For this, one needs
to find an invariant of a complex analogous to the dimension of a
module. On this topic, Vossieck is undoubtedly a pioneer. He
introduced and classified {\it derived discrete algebras}, i.e., the
algebras whose bounded derived categories admit only finitely many
isomorphism classes of indecomposable objects of arbitrarily given
cohomology dimension vector, in his elegant paper \cite{Vo01}. Since
a complex and its shifts are of different cohomology dimension
vectors, for a non derived discrete algebra, there are always
infinitely many $\mathbf{d} \in \mathbb{N}^{(\mathbb{Z})}$ such that
for each $\mathbf{d}$, there exist infinitely many isomorphism
classes of indecomposable objects of cohomology dimension vector
$\mathbf{d}$ in its bounded derived category. Nevertheless,
cohomology dimension vector is seemingly not a perfect invariant of
complexes in the context of derived versions of Brauer-Thrall
conjectures, because it is too fine to identify an indecomposable
complex with its shifts and cannot be used to define the derived
boundedness and strongly derived unboundedness of algebras.

In this paper, we shall introduce the cohomological range of a
bounded complex which is a numerical invariant under shifts and
isomorphisms. It leads to the concepts of derived bounded algebras
and strongly derived unbounded algebras naturally. We shall prove
the following two Brauer-Thrall type theorems for derived module
categories:

\medskip

{\bf Theorem I.} {\it Derived bounded algebras are just derived
finite algebras.}

\medskip

{\bf Theorem II.} {\it An algebra is either derived discrete or
strongly derived unbounded, but not both.}

\medskip

According to Theorem I and Theorem II, all algebras are divided into
three disjoint classes: derived finite algebras, derived discrete
but not derived finite algebras, and strongly derived unbounded
algebras. In particular, Theorem II excludes the existence of such
an algebra for which there are only (nonempty) finitely many $r \in
\mathbb{N}$ such that for each $r$, up to shift and isomorphism,
there exist infinitely many indecomposable objects of cohomological
range $r$ in its bounded derived category.

\medskip

The paper is organized as follows: in Section 2, we shall introduce
some numerical invariants of complexes (algebras) including (global)
cohomological length, (global) cohomological width, and (global)
cohomological range, and observe their behaviors under derived
equivalences. Global cohomological width provides an alternative
definition of strong global dimension on the level of bounded
derived category, and piecewise hereditary algebras are
characterized as the algebras of finite global cohomological width.
Furthermore, we shall prove Theorem I. In Section 3, we shall show
that strongly derived unboundedness is invariant under derived
equivalences, and observe its relation with cleaving functors.
Furthermore, we shall prove Theorem II for simply connected
algebras, gentle algebras, and finally all algebras by using
cleaving functors and covering theory. Moreover, derived discrete
algebras are characterized as the algebras of finite global
cohomological length.

\section{\large The first Brauer-Thrall type theorem}

\subsection{\normalsize Some numerical invariants of complexes and algebras}

Let $A$ be a (finite-dimensional) $k$-algebra. Denote by $\mod A$
the category of all (finite-dimensional) right $A$-modules, and by
$\proj A$ its full subcategory consisting of all finite-dimensional
projective right $A$-modules. Denote by $C(A)$ the category of all
complexes of finite-dimensional right $A$-modules, and by $C^b(A)$
and $C^{-,b}(A)$ its full subcategories consisting of all bounded
complexes and right bounded complexes with bounded cohomology
respectively. Denote by $C^b(\proj A)$ and $C^{-,b}(\proj A)$ the
full subcategories of $C^b(A)$ and $C^{-,b}(A)$ respectively
consisting of all complexes of finite-dimensional projective
modules. Denote by $K(A)$, $K^b(\proj A)$ and $K^{-,b}(\proj A)$ the
homotopy categories of $C(A)$, $C^b(\proj A)$ and $C^{-,b}(\proj A)$
respectively. Denote by $D^b(A)$ the bounded derived category of
$\mod A$. Moreover, $\dim := \dim_k$, the dimension of a $k$-vector
space.

\medskip

Now we introduce some numerical invariants of complexes and
algebras.

\begin{definition}{\rm The {\it cohomological length} of a complex $X^{\bullet} \in
D^b(A)$ is $$\hl(X^{\bullet}) := \max\{\dim H^i(X^{\bullet}) \; | \;
i \in \mathbb{Z}\},$$ and the {\it global cohomological length} of
$A$ is
$$\mbox{\rm gl.hl} A := \sup\{\hl(X^{\bullet}) \; | \; X^{\bullet} \in D^b(A)
\mbox{ is indecomposable}\}.$$ }\end{definition}

Obviously, the dimension of an $A$-module $M$ is equal to the
cohomological length of the stalk complex $M$. Note that there is a
full embedding of $\mod A$ into $D^b(A)$ which sends a module to the
corresponding stalk complex. If $\mbox{gl.hl} A < \infty$ then $A$
is representation-finite due to the truth of Brauer-Thrall
conjecture I.

\begin{definition}{\rm The {\it cohomological width} of a complex $X^{\bullet} \in D^b(A)$
is
$$\hw(X^{\bullet}) := \max\{j-i+1 \; | \; H^i(X^{\bullet}) \neq 0 \neq H^j(X^{\bullet})\},$$
and the {\it global cohomological width} of $A$ is
$$\mbox{\rm gl.hw} A := \sup\{\hw(X^{\bullet}) \; | \; X^{\bullet} \in D^b(A) \mbox{ is
indecomposable}\}.$$ }\end{definition}

Clearly, the cohomological width of a stalk complex is 1. If $A$ is
a hereditary algebra then every indecomposable complex $X^{\bullet}
\in D^b(A)$ is isomorphic to a stalk complex by \cite[I.5.2
Lemma]{Hap88}. Thus $\mbox{\rm gl.hw} A = 1$.

\begin{definition}{\rm The {\it cohomological range} of a complex $X^{\bullet} \in D^b(A)$
is
$$\hr(X^{\bullet}) := \hl(X^{\bullet}) \cdot \hw(X^{\bullet}),$$
and the {\it global cohomological range} of $A$ is
$$\mbox{\rm gl.hr} A := \sup\{\hr(X^{\bullet}) \; | \; X^{\bullet} \in D^b(A)
\mbox{ is indecomposable}\}.$$ }\end{definition}

The cohomological range of a complex will play a role similar to the
dimension of a module. It is invariant under shifts and
isomorphisms, since both cohomological length and cohomological
width are.

\medskip

Next we observe the behaviors of these invariants under derived
equivalences. For this, we need do some preparations.

Let $\mathcal{T}$ be a triangulated $k$-category with $[1]$ the
shift functor. For $T \in \mathcal{T}$, we define $\langle
T\rangle_n$ inductively by
$$\langle T \rangle_0 := \{X \in \mathcal{T} \; | \; X \mbox{ is a direct summand of }
T[i] \mbox{ for some } i \in \mathbb{Z}\},$$ and
$$\langle T\rangle_n := \left\{X \in \mathcal{T} \; \left| \;
\begin{array}{l} Y'\rightarrow X \oplus Y \rightarrow Y'' \rightarrow Y'[1] \mbox{ is
a triangle in } \mathcal{T} \\ \mbox{ with } Y', Y'' \in \langle T
\rangle_{n-1} \mbox{ and } Y \in \mathcal{T}
\end{array}\right.\right\}.$$ Clearly, $\langle T \rangle_{n-1} \subseteq
\langle T \rangle_{n}$ and $\langle T \rangle := \bigcup_{n \geq 0}
\langle T \rangle_{n}$ is the smallest thick subcategory of
$\mathcal{T}$ containing $T$. For $X \in \langle T \rangle$, the
{\it distance} of $X$ from $T$ is $$d(X, T) := \min\{n \in
\mathbb{N} \; | \; X \in \langle T \rangle_{n}\}.$$

\begin{lemma} \label{lemma-GK} {\rm (See Geiss and Krause \cite[Lemma
4.1]{GK02})} Let $\mathcal{T}$ be a triangulated $k$-category, $T
\in \mathcal{T}$ and $X \in \langle T \rangle$. Then for all $Y \in
\mathcal{T}$, $$\dim \; \Hom_{\mathcal{T}}(X, Y) \leq 2^{d(X,
T)}\sup\limits_{i \in \mathbb{Z}} \; \dim \;
\Hom_{\mathcal{T}}(T[i], Y).$$
\end{lemma}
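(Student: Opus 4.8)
The plan is to induct on the distance $n := d(X, T)$, abbreviating $S := \sup_{i \in \mathbb{Z}} \dim \Hom_{\mathcal{T}}(T[i], Y)$; the asserted inequality then reads $\dim \Hom_{\mathcal{T}}(X, Y) \leq 2^n S$.

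For the base case $n = 0$, the object $X$ lies in $\langle T \rangle_0$, so $X$ is a direct summand of $T[i]$ for some $i \in \mathbb{Z}$. Applying $\Hom_{\mathcal{T}}(-, Y)$, the space $\Hom_{\mathcal{T}}(X, Y)$ becomes a direct summand of $\Hom_{\mathcal{T}}(T[i], Y)$, whence $\dim \Hom_{\mathcal{T}}(X, Y) \leq \dim \Hom_{\mathcal{T}}(T[i], Y) \leq S = 2^0 S$.

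For the inductive step, suppose the inequality holds for every object of distance at most $n-1$, and let $X \in \langle T \rangle_n$. By definition there is a triangle $Y' \to X \oplus Y \to Y'' \to Y'[1]$ with $Y', Y'' \in \langle T \rangle_{n-1}$ and some $Y \in \mathcal{T}$. Since $\Hom_{\mathcal{T}}(-, Y)$ is a contravariant cohomological functor, it carries this triangle to a long exact sequence whose exactness at the middle term reads $\Hom_{\mathcal{T}}(Y'', Y) \to \Hom_{\mathcal{T}}(X \oplus Y, Y) \to \Hom_{\mathcal{T}}(Y', Y)$. Exactness gives $\dim \Hom_{\mathcal{T}}(X \oplus Y, Y) \leq \dim \Hom_{\mathcal{T}}(Y'', Y) + \dim \Hom_{\mathcal{T}}(Y', Y)$. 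As $\Hom_{\mathcal{T}}(X, Y)$ is a direct summand of $\Hom_{\mathcal{T}}(X \oplus Y, Y)$, and as $d(Y', T), d(Y'', T) \leq n-1$, the induction hypothesis yields $\dim \Hom_{\mathcal{T}}(X, Y) \leq 2^{n-1} S + 2^{n-1} S = 2^n S$, which closes the induction.

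The argument is essentially a routine dévissage, and the only genuine subtlety is the auxiliary summand $Y$ built into the definition of $\langle T \rangle_n$: one must apply $\Hom_{\mathcal{T}}(-, Y)$ to the whole object $X \oplus Y$ and only afterwards recover the desired bound on $\Hom_{\mathcal{T}}(X, Y)$ by splitting it off as a direct summand. The doubling in the exponent is forced precisely because each passage through the long exact sequence divides the estimate into the two contributions coming from $Y'$ and $Y''$, and the main point to verify is that these contributions compound to the factor $2^n$ across the $n$ inductive steps rather than something larger.
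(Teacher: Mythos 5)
Your proof is correct. The paper itself contains no argument for this lemma---it is imported verbatim from Geiss and Krause \cite[Lemma 4.1]{GK02}---and your induction on $d(X,T)$ is exactly the standard d\'evissage behind that result: the base case from the summand property defining $\langle T\rangle_0$, and the inductive step from exactness of $\Hom_{\mathcal{T}}(-,Y)$ applied to the defining triangle, with the summand trick isolating $\Hom_{\mathcal{T}}(X,Y)$ inside $\Hom_{\mathcal{T}}(X\oplus Z,Y)$ and each pass contributing the factor $2$. One cosmetic point: you use the letter $Y$ both for the fixed target object of the Hom functor and for the auxiliary summand in the triangle defining $\langle T\rangle_n$ (a clash you inherited from the paper's own notation); rename the auxiliary object, say to $Z$, since otherwise the display $\Hom_{\mathcal{T}}(X\oplus Y,Y)$ reads as if the two occurrences of $Y$ were forced to coincide, when in fact the argument works for an arbitrary auxiliary summand.
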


\begin{proposition}
\label{prop-equi-h} Let $A$ and $B$ be two algebras,
$_AT^{\bullet}_B$ a two-sided tilting complex, and $F = -
\otimes^L_A T^{\bullet}_B : D^b(A) \rightarrow D^b(B)$ a derived
equivalence. Then there are $N_1, N_2, N_3 \in \mathbb{N}$ such that
for all $X^{\bullet} \in D^b(A)$,

{\rm (1)} $\hw(F(X^{\bullet})) \leq \hw(X^{\bullet}) + N_1$,

{\rm (2)} $\hl(F(X^{\bullet})) \leq N_2 \cdot \hl(X^{\bullet})$,

{\rm (3)} $\hr(F(X^{\bullet})) \leq N_3 \cdot \hr(X^{\bullet})$.
\end{proposition}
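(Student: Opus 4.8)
For (1), the plan is to exploit that $F(A) = A \otimes^L_A T^{\bullet} = T^{\bullet}$ has bounded cohomology, say concentrated in degrees $[a,b]$, so that $F$ sends every stalk complex (module) to a complex of width at most $b-a+1 = \hw(T^{\bullet})$. I would then run a dévissage on $\hw(X^{\bullet})$: writing the truncation triangle $\tau_{\leq p}X^{\bullet} \to X^{\bullet} \to \tau_{>p}X^{\bullet} \to$ with $p$ the lowest nonzero cohomological degree of $X^{\bullet}$, applying the triangulated functor $F$, and reading the long exact cohomology sequence of the resulting triangle. Since $\tau_{\leq p}X^{\bullet}$ is a shifted stalk and $\tau_{>p}X^{\bullet}$ has strictly smaller width, induction shows that $H^n(F(X^{\bullet}))$ vanishes outside $[p+a,\,q+b]$ whenever $H^{*}(X^{\bullet})$ lives in $[p,q]$, giving $\hw(F(X^{\bullet})) \leq \hw(X^{\bullet}) + (\hw(T^{\bullet})-1)$. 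Thus $N_0 = \hw(T^{\bullet})-1$. This part is routine.

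The essential step is (2), where Lemma \ref{lemma-GK} does the real work. The first move is to express each cohomology dimension as a $\Hom$-space: since $A$ and $B$ are projective, $\dim H^j(F(X^{\bullet})) = \dim \Hom_{D^b(B)}(B, F(X^{\bullet})[j])$, and transporting across the equivalence gives $\dim H^j(F(X^{\bullet})) = \dim \Hom_{D^b(A)}(F^{-1}(B), X^{\bullet}[j])$. Now $F^{-1}(B)$ is a fixed object: being $B \otimes^L_B S^{\bullet}$ for the inverse two-sided tilting complex $S^{\bullet}$, it is a bounded complex of projective $A$-modules, hence a finite iterated extension of shifts of summands of $A$, so it lies in $\langle A\rangle$ at some finite distance $d := d(F^{-1}(B), A)$. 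Applying Lemma \ref{lemma-GK} with $T = A$ and $Y = X^{\bullet}[j]$, together with the identity $\dim \Hom_{D^b(A)}(A[i], X^{\bullet}[j]) = \dim H^{j-i}(X^{\bullet}) \leq \hl(X^{\bullet})$, yields $\dim H^j(F(X^{\bullet})) \leq 2^{d}\,\hl(X^{\bullet})$ for every $j$. Taking the maximum over $j$ gives (2) with $N_1 = 2^{d}$. The one point demanding care is verifying $F^{-1}(B) \in \langle A\rangle$, which I would argue from the fact that an inverse two-sided tilting complex is perfect, i.e. lies in $\per A \simeq K^b(\proj A)$.

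Finally, (3) follows formally from (1) and (2). Since $\hw(X^{\bullet}) \geq 1$ for every nonzero complex, we have $\hl(X^{\bullet}) \leq \hw(X^{\bullet})\hl(X^{\bullet}) = \hr(X^{\bullet})$, and therefore $\hr(F(X^{\bullet})) = \hw(F(X^{\bullet}))\,\hl(F(X^{\bullet})) \leq (\hw(X^{\bullet})+N_0)\,N_1\hl(X^{\bullet}) \leq N_1(1+N_0)\,\hr(X^{\bullet})$, so that $N_2 = N_1(1+N_0)$ works. The main obstacle is concentrated entirely in (2): the additive width bound (1) is by itself too weak to control cohomological length, and it is Lemma \ref{lemma-GK} applied to the transported generator $F^{-1}(B)$ that converts the finite generation distance $d$ into the uniform multiplicative constant $N_1 = 2^{d}$.
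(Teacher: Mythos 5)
Your parts (2) and (3) are correct and essentially reproduce the paper's own argument: the paper applies Lemma~\ref{lemma-GK} in $D^b(B)$ with generator $F(A)$ to the object $B\in\langle F(A)\rangle$, while you apply it in $D^b(A)$ with generator $A$ to the object $F^{-1}(B)\in\langle A\rangle$; since a triangle equivalence preserves generation distance, $d(F^{-1}(B),A)=d(B,F(A))$, so the two estimates are identical, and your perfectness argument for $F^{-1}(B)$ is exactly the paper's observation that a quasi-inverse sends $B$ into $K^b(\proj A)=\langle A\rangle$. The genuine gap is in (1), at the base case of your d\'evissage: it is \emph{not} true that $\hw(F(M))\leq\hw(T^{\bullet})$ for every $A$-module $M$. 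Counterexample: let $U_A$ be a non-projective classical tilting module over an algebra $A$ (e.g.\ over the path algebra of $1\rightarrow 2$), let $B=\End_A(U)$, and view ${}_BU_A$ as a two-sided tilting complex, so that $G=-\otimes^L_B U\colon D^b(B)\rightarrow D^b(A)$ is a derived equivalence with $\hw({}_BU_A)=1$. Since $U_A$ is not projective, the torsion-free class $\mathcal{F}(U)$ is nonzero (if $\mathcal{F}(U)=0$ then $\Ext^1_A(U,-)=0$ and $U$ would be projective), hence $\mathcal{X}(U)\neq 0$ by the Brenner--Butler equivalence $\Ext^1_A(U,-)\colon\mathcal{F}(U)\stackrel{\sim}{\rightarrow}\mathcal{X}(U)$. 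Choose $0\neq N_1\in\mathcal{X}(U)$, so $N_1\otimes_BU=0$ and therefore $\Tor^B_1(N_1,U)\neq 0$ (otherwise $G(N_1)=0$). Then $N:=N_1\oplus B$ is a stalk complex, yet $H^{-1}(G(N))=\Tor^B_1(N_1,U)\neq 0$ and $H^0(G(N))=U\neq 0$, so $\hw(G(N))=2$, violating your claimed bound $\hw(G(N))\leq\hw(N)+(\hw({}_BU_A)-1)=1$.

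The failure is structural: the cohomological spread of $M\otimes^L_AT^{\bullet}$ is governed not by the cohomological width of $T^{\bullet}$ but by the number of nonzero components of a complex of projective (or flat) left $A$-modules representing ${}_AT^{\bullet}$ --- higher $\Tor$'s appear precisely in that range. This is where the paper's proof of (1) does its work: since ${}_AT^{\bullet}$ is perfect as a complex of left $A$-modules, one chooses a bounded complex ${}_A\bar{T}^{\bullet}$ of projectives with ${}_AT^{\bullet}\cong{}_A\bar{T}^{\bullet}$, and then $F(X^{\bullet})\cong\mbox{Tot}^{\oplus}(\bar{X}^{\bullet}\otimes_A\bar{T}^{\bullet})$ for a good-truncation representative $\bar{X}^{\bullet}$ with $\mbox{w}(\bar{X}^{\bullet})=\hw(X^{\bullet})$, which yields $N_0=\mbox{w}({}_A\bar{T}^{\bullet})$ in one stroke. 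Your induction on $\hw$ via truncation triangles is fine as a scheme and would go through verbatim once the base case is corrected to $\hw(F(M))\leq\mbox{w}({}_A\bar{T}^{\bullet})$; but the constant must come from a projective representative of ${}_AT^{\bullet}$, not from $\hw(T^{\bullet})$, and as written your proof of (1) is incorrect.
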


\begin{proof} (1) Recall that the {\it width} of a complex $Y^{\bullet} \in C^b(A)$
is $$\mbox{w}(Y^{\bullet}) := \max\{j-i+1 \; | \; Y^j \neq 0 \neq
Y^i\}.$$  For any $X^{\bullet} \in D^b(A)$, there exists a complex
$\tilde{X}^{\bullet} \in D^b(A)$ which can be obtained from
$X^{\bullet}$ by good truncations, such that
$\hw(\tilde{X}^{\bullet}) = \mbox{w}(\tilde{X}^{\bullet})$ and
$\tilde{X}^{\bullet} \cong X^{\bullet}$ in $D^b(A)$. Since
$_AT^{\bullet}_B$ is a two-sided tilting complex, there is a perfect
complex $_A\tilde{T}^\bullet \in C^b(\proj A^{\op})$ such that
$_AT^{\bullet} \cong \,_A\tilde{T}^\bullet$ in $D^b(A^\op)$. Thus
$F(\tilde{X}^{\bullet}) = \tilde{X}^{\bullet} \otimes^L_A
T^{\bullet} \cong \tilde{X}^{\bullet} \otimes_A \tilde{T}^{\bullet}$
in $D^b(k)$. Hence $\hw(F(X^{\bullet})) =
\hw(F(\tilde{X}^{\bullet})) = \hw(\tilde{X}^{\bullet} \otimes_A
\tilde{T}^{\bullet}) \leq \mbox{w}(\tilde{X}^{\bullet} \otimes_A
\tilde{T}^{\bullet}) \leq \mbox{w}(\tilde{X}^{\bullet}_A) +
\mbox{w}(_A\tilde{T}^{\bullet}) -1 = \hw(\tilde{X}^{\bullet}_A) +
\mbox{w}(_A\tilde{T}^{\bullet}) -1 = \hw(X^{\bullet}_A) +
\mbox{w}(_A\tilde{T}^{\bullet}) -1.$ So $N_1 :=
\mbox{w}(_A\tilde{T}^{\bullet})-1$ is as required.

\medskip

(2) Since $F$ is a derived equivalence, we have $B\in K^b(\proj
B)=\langle F(A) \rangle$. By Lemma~\ref{lemma-GK}, we get
$$\begin{array}{rcl} \dim \; H^i(F(X^{\bullet})) &
= & \dim \; \Hom_{D^b(B)}(B, F(X^{\bullet})[i]) \\
& \leq & 2^{d(B, F(A))} \;
\sup\limits_{j \in \mathbb{Z}} \; \dim \; \Hom_{D^b(B)}(F(A), F(X^{\bullet})[j]) \\
& = & 2^{d(B, F(A))} \;
\sup\limits_{j \in \mathbb{Z}} \; \dim \; \Hom_{D^b(A)}(A, X^{\bullet}[j]) \\
&
= & 2^{d(B, F(A))} \; \sup\limits_{j \in \mathbb{Z}} \; \dim \; H^j(X^{\bullet}) \\
& = & 2^{d(B, F(A))} \; \hl(X^{\bullet}).
\end{array}$$ Thus $N_2 := 2^{d(B, F(A))}$ is as required.

\medskip

(3) It follows from (1) and (2) that $\hr(F(X^{\bullet})) =
\hl(F(X^{\bullet})) \cdot \hw(F(X^{\bullet})) \leq N_2 \cdot
\hl(X^{\bullet}) \cdot (\hw(X^{\bullet})+N_1) \leq N_2(N_1+1) \cdot
\hr(X^{\bullet})$. Thus $N_3 := N_2(N_1+1)$ is as required.
\end{proof}

\begin{corollary}
\label{coro-equi-h} Let two algebras $A$ and $B$ be derived
equivalent. Then $\mbox{\rm gl.hw} A < \infty$ {\rm (}resp.
$\mbox{\rm gl.hl} A < \infty$, $\mbox{\rm gl.hr} A < \infty${\rm )}
if and only if $\mbox{\rm gl.hw} B < \infty$ {\rm (}resp. $\mbox{\rm
gl.hl} B < \infty$, $\mbox{\rm gl.hr} B < \infty${\rm )}.
\end{corollary}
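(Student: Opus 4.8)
The plan is to combine the one-sided bounds of Proposition~\ref{prop-equi-h} with the elementary fact that a derived equivalence is an equivalence of categories, hence both preserves indecomposability and is essentially surjective. The three statements (for $\mathrm{gl.hw}$, $\mathrm{gl.hl}$, $\mathrm{gl.hr}$) are proved by identical arguments, using parts (1), (2), (3) of the Proposition respectively, so I describe the case of $\mathrm{gl.hw}$ in detail.

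First I would fix a two-sided tilting complex $_AT^{\bullet}_B$ realizing the derived equivalence $F = - \otimes^L_A T^{\bullet}_B : D^b(A) \rightarrow D^b(B)$; by Rickard's Morita theory for derived categories such a complex exists whenever $A$ and $B$ are derived equivalent. I would also fix a quasi-inverse $G : D^b(B) \rightarrow D^b(A)$, which is again given by a two-sided tilting complex, so that Proposition~\ref{prop-equi-h} applies equally to $G$, furnishing its own constants.

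Now suppose $\mathrm{gl.hw}\,A < \infty$, so that $\hw(X^{\bullet}) \leq \mathrm{gl.hw}\,A$ for every indecomposable $X^{\bullet} \in D^b(A)$. Given an arbitrary indecomposable $Y^{\bullet} \in D^b(B)$, essential surjectivity of $F$ lets me write $Y^{\bullet} \cong F(X^{\bullet})$ for some $X^{\bullet} \in D^b(A)$, and since $F$ is an equivalence this $X^{\bullet}$ is again indecomposable. Part (1) of Proposition~\ref{prop-equi-h} then gives $\hw(Y^{\bullet}) = \hw(F(X^{\bullet})) \leq \hw(X^{\bullet}) + N_0 \leq \mathrm{gl.hw}\,A + N_0$, whence $\mathrm{gl.hw}\,B \leq \mathrm{gl.hw}\,A + N_0 < \infty$. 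The reverse implication is obtained verbatim after replacing $F$ by $G$. For $\mathrm{gl.hl}$ one uses the multiplicative bound $\hl(F(X^{\bullet})) \leq N_1 \cdot \hl(X^{\bullet})$ of part (2), and for $\mathrm{gl.hr}$ the bound of part (3); in each instance finiteness of the global invariant for $A$ forces a uniform bound on the corresponding invariant of every indecomposable object of $D^b(B)$.

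The only point requiring care is the passage from the abstract hypothesis that $A$ and $B$ are derived equivalent to a concrete derived equivalence given by a two-sided tilting complex, since Proposition~\ref{prop-equi-h} is stated in that form and must be applied to both $F$ and its quasi-inverse. This is exactly the content of Rickard's theorem, so I do not anticipate any genuine obstacle; once both $F$ and $G$ are presented by tilting complexes, the remainder is a direct bookkeeping of the three inequalities against the definitions of the global invariants.
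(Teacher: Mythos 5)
Your proof is correct and takes essentially the same route as the paper's: the paper likewise invokes Rickard's theorem to present the derived equivalence as $-\otimes^L_A T^{\bullet}_B$ and then derives the corollary from Proposition~\ref{prop-equi-h}, leaving implicit exactly the bookkeeping you spell out (equivalences preserve and reflect indecomposability, and the quasi-inverse direction is handled by a standard equivalence $D^b(B)\rightarrow D^b(A)$ with its own constants). Your write-up simply makes these implicit steps explicit, so there is nothing to correct.
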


\begin{proof} Since $A$ and $B$ are derived equivalent,
there is a two-sided tilting complex $_AT^{\bullet}_B$ such that $-
\otimes^L_A T^{\bullet}_B : D^b(A) \rightarrow D^b(B)$ is a derived
equivalence \cite{Ri91}. So the corollary follows from
Proposition~\ref{prop-equi-h}. \end{proof}

\subsection{\normalsize Strong global dimension}

Strong global dimension was introduced by Skowro\'{n}ski in
\cite{Sko87}. Happel and Zacharia characterized piecewise hereditary
algebras as the algebras of finite strong global dimension
\cite{HZ08}. Here, we adopt the definition of strong global
dimension in \cite{HZ08}, which is slightly different from that in
\cite{Sko87}.

\medskip

Recall that a complex $X^{\bullet}=(X^i, d^i) \in C(A)$ is said to
be {\it minimal} if $\Im d^i \subseteq \rad X^{i+1}$ for all $i \in
\mathbb{Z}$. For any complex $P^{\bullet}=(P^i, d^i) \in K^b(\proj
A)$, there is a minimal complex $\bar{P}^{\bullet}=(\bar{P}^i,
\bar{d}^i) \in K^b(\proj A)$, which is unique up to isomorphism in
$C^b(A)$, such that $P^{\bullet} \cong \bar{P}^{\bullet}$ in
$K^b(\proj A)$. The {\it length} of $P^{\bullet}$ is
$$l(P^{\bullet}) := \max\{j-i \; | \; \bar{P}^i \neq 0 \neq
\bar{P}^j \}.$$ The {\it strong global dimension} of $A$ is
$$\mbox{\rm s.gl.dim}A := \sup\{l(P^{\bullet}) \; | \; P^{\bullet} \in
K^b(\proj A) \mbox{ is indecomposable}\}.$$

Obviously, for a module of finite projective dimension, the length
of its minimal projective resolution equals to its projective
dimension. Furthermore, if $\mbox{\rm gl.dim} A < \infty$ then
$\mbox{\rm s.gl.dim} A \geq \mbox{\rm gl.dim} A$.

\medskip

The following result sets up the connection between the
indecomposable objects in $K^b(\proj A)$ and those in $K^{-,b}(\proj
A)$.

\begin{proposition} \label{prop-indec}
Let $P^{\bullet} \in K^{-,b}(\proj A)$ be a minimal complex and $n
:= \min\{i \in \mathbb{Z} \; | \; H^i(P^{\bullet}) \neq 0\}$. Then
$P^{\bullet}$ is indecomposable if and only if so is the brutal
truncation $\sigma_{\geq j}(P^{\bullet}) \in K^b(\proj A)$ for some
(resp. all) $j<n$.
\end{proposition}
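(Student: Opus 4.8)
The plan is to reduce everything to the splitting behaviour of minimal complexes, where indecomposability can be detected on the chain level. First I would record that both $P^{\bullet}$ and its brutal truncation $\sigma_{\geq j}(P^{\bullet})$ are minimal, since truncation does not disturb the condition $\Im d^i \subseteq \rad P^{i+1}$, and that $K^{-,b}(\proj A)$ and $K^b(\proj A)$ are Krull--Schmidt, so an object there is indecomposable precisely when its endomorphism ring has no nontrivial idempotents. For a minimal complex $Q^{\bullet}$ the null-homotopic endomorphisms lie in $\rad \End_{C(A)}(Q^{\bullet})$: if $f = d\,h + h\,d$ then minimality forces each $f^i(Q^i) \subseteq \rad Q^i$. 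Hence the projection $\End_{C(A)}(Q^{\bullet}) \to \End_{K(A)}(Q^{\bullet})$ has kernel inside the radical, so idempotents correspond and $Q^{\bullet}$ is indecomposable in the homotopy category if and only if it admits no nontrivial direct-sum decomposition in $C(A)$. Thus it suffices to prove, for each fixed $j < n$, that $P^{\bullet}$ decomposes nontrivially in $C(A)$ exactly when $\sigma_{\geq j}(P^{\bullet})$ does.

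For the implication from $P^{\bullet}$ to its truncation, suppose $P^{\bullet} = P_1^{\bullet} \oplus P_2^{\bullet}$ nontrivially in $C(A)$; each summand is again minimal, and applying $\sigma_{\geq j}$ gives $\sigma_{\geq j}(P^{\bullet}) = \sigma_{\geq j}(P_1^{\bullet}) \oplus \sigma_{\geq j}(P_2^{\bullet})$. I would argue that no summand can vanish: if $\sigma_{\geq j}(P_1^{\bullet}) = 0$ then $P_1^{\bullet}$ is concentrated in degrees $< j < n$, hence acyclic, since its cohomology is a summand of $H^{\bullet}(P^{\bullet})$, which vanishes below $n$; a bounded-above acyclic complex of projectives is contractible, and a minimal contractible complex is zero, contradicting nontriviality. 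So the truncated decomposition is again nontrivial.

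The substantive direction is the converse, and this is where I expect the main obstacle: one must descend a decomposition of the finite complex $\sigma_{\geq j}(P^{\bullet})$ down the infinite projective tail of $P^{\bullet}$. Because $j < n$ we have $H^j(P^{\bullet}) = 0$, so $Z := \Ker d^j = \Im d^{j-1}$, and the lower part $\cdots \to P^{j-1} \to Z \to 0$ is exact; as $P^{\bullet}$ is minimal, this is precisely the minimal projective resolution of the module $Z$. Now any endomorphism of $\sigma_{\geq j}(P^{\bullet})$ commutes with $d^j$, hence preserves $Z$, so a nontrivial idempotent splits $Z = Z_1 \oplus Z_2$ compatibly with the splitting $\sigma_{\geq j}(P^{\bullet}) = Q_1^{\bullet} \oplus Q_2^{\bullet}$. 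By the uniqueness and additivity of minimal projective resolutions, the minimal resolution of $Z$ splits accordingly as the sum of the minimal resolutions of $Z_1$ and $Z_2$, and this splitting is compatible at the junction with the augmentation $d^{j-1}\colon P^{j-1} \twoheadrightarrow Z \hookrightarrow P^j$. Gluing the two halves yields a nontrivial decomposition $P^{\bullet} = P_1^{\bullet} \oplus P_2^{\bullet}$ with $\sigma_{\geq j}(P_k^{\bullet}) = Q_k^{\bullet}$. The hard part is exactly this gluing: identifying the tail as an honest minimal projective resolution is what lets one avoid lifting idempotents through the (non-semiperfect) endomorphism ring of the infinite complex, replacing that analytic difficulty by the additivity of minimal resolutions. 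Finally, since the argument holds for every fixed $j < n$, the equivalence for \emph{all} $j < n$ and for \emph{some} $j < n$ both follow at once.
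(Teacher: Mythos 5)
Your proof is correct, and it reaches the proposition by a genuinely different route than the paper, although both arguments rest on the same two pillars: minimality forces null-homotopic endomorphisms to have componentwise image in the radical, and $\sigma_{\leq j-1}(P^{\bullet})$ is a minimal projective resolution of $Z=\Ker d^j$. The paper argues ring-theoretically and never lifts idempotents at all: using the lifting property of the minimal resolution it shows the restriction map $\End_{C(A)}(P^{\bullet})\rightarrow\End_{C(A)}(\sigma_{\geq j}(P^{\bullet}))$ is surjective, and that the induced surjection $\End_{K(A)}(P^{\bullet})\twoheadrightarrow \End_{C(A)}(\sigma_{\geq j}(P^{\bullet}))/\rad\End_{C(A)}(\sigma_{\geq j}(P^{\bullet}))$ has kernel exactly $\rad\End_{K(A)}(P^{\bullet})$, every element of the kernel being nilpotent because its powers lift the zero map along the resolution; the resulting isomorphism of semisimple quotients yields both directions simultaneously, since indecomposability amounts to the quotient being $k$, and all radical arguments take place in finite-dimensional algebras. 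You instead work at the level of objects: you reduce homotopy indecomposability to chain-level indecomposability on each side, then transport actual decompositions --- truncating in one direction (your acyclic-summand-is-contractible-hence-zero check is exactly the needed degenerate case) and gluing in the other via uniqueness and additivity of minimal projective resolutions of the syzygy $Z$. Your route is more constructive and replaces the paper's nilpotency analysis by a standard fact about minimal resolutions, but it pays a price the paper avoids: you must lift idempotents through the possibly infinite-dimensional ring $\End_{C(A)}(P^{\bullet})$, and your stated justification, ``has kernel inside the radical, so idempotents correspond,'' is not a valid inference in that generality. What rescues the step is that the ideal of null-homotopic endomorphisms is nil --- indeed nilpotent, since by your own computation its elements have componentwise image in the radical, so any product of $N$ of them vanishes once $(\rad A)^N=0$ --- and idempotents always lift modulo nil ideals. (For the same reason, your closing remark that the gluing lets you ``avoid lifting idempotents through the endomorphism ring of the infinite complex'' is not quite accurate: your own reduction of indecomposability of $P^{\bullet}$ in $K(A)$ to indecomposability in $C(A)$ already uses exactly such a lift; what you avoid is only lifting idempotents from the truncation directly to the full complex.) With that justification repaired your argument is complete; it proves slightly less than the paper's, which identifies the whole tops of the two endomorphism algebras rather than just matching idempotents, but that extra information is not needed for the proposition.
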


\begin{proof}
Since $K^{-,b}(\proj A) \simeq D^b(A)$ is a Krull-Schmidt category,
a complex $X^{\bullet}\in K^{-,b}(\proj A)$ is indecomposable if and
only if its endomorphism algebra $\End_{K(A)}(X^{\bullet})$ is
local, if and only if $\End_{K(A)}(X^{\bullet}) /\rad
\End_{K(A)}(X^{\bullet}) \cong k$. Hence, it suffices to show
$$\End_{K(A)}(P^{\bullet}) / \rad \End_{K(A)}(P^{\bullet})
\cong \End_{K(A)}(\sigma_{\geq j}(P^{\bullet})) / \rad
\End_{K(A)}(\sigma_{\geq j}(P^{\bullet})).$$

Since $P^{\bullet}$ is minimal, all null homotopies in
$\End_{C(A)}(\sigma_{\geq j}(P^{\bullet}))$ form a nilpotent ideal
of $\End_{C(A)}(\sigma_{\geq j}(P^{\bullet}))$, thus are in $\rad
\End_{C(A)}(\sigma_{\geq j}(P^{\bullet}))$. Hence we have
$$\End_{K(A)}(\sigma_{\geq j}(P^{\bullet})) / \rad
\End_{K(A)}(\sigma_{\geq j}(P^{\bullet})) \cong
\End_{C(A)}(\sigma_{\geq j}(P^{\bullet}))/\rad
\End_{C(A)}(\sigma_{\geq j}(P^{\bullet})).$$ Now it is enough to
show
$$\End_{K(A)}(P^{\bullet}) / \rad \End_{K(A)}(P^{\bullet})
\cong \End_{C(A)}(\sigma_{\geq j}(P^{\bullet})) / \rad
\End_{C(A)}(\sigma_{\geq j}(P^{\bullet})).$$

Consider the composition of homomorphisms of algebras
$$\End_{C(A)}(P^{\bullet})\stackrel{\phi}{\rightarrow}
\End_{C(A)}(\sigma_{\geq j}(P^{\bullet}))
\stackrel{\psi}{\rightarrow} \End_{C(A)}(\sigma_{\geq
j}(P^{\bullet}))/\rad \End_{C(A)}(\sigma_{\geq j}(P^{\bullet})),$$
where $\phi$ is the natural restriction and $\psi$ is the canonical
epimorphism. Since $\sigma_{\leq j-1}(P^{\bullet})$ is a minimal
projective resolution of $\Ker d^j$, every cochain map in \linebreak
$\End_{C(A)}(\sigma_{\geq j}(P^{\bullet}))$ can be lifted to a
cochain map in $\End_{C(A)}(P^{\bullet})$, i.e., $\phi$ is
surjective. Thus the composition $\varphi := \psi\phi$ is
surjective. Since $P^{\bullet}$ is a minimal complex, all null
homotopies in $\End_{C(A)}(P^{\bullet})$ form a nilpotent ideal of
$\End_{C(A)}(P^{\bullet})$, thus are in
$\rad\End_{C(A)}(P^{\bullet})$. Furthermore, $\phi$ maps all null
homotopies in $\End_{C(A)}(P^{\bullet})$ into $\rad
\End_{C(A)}(\sigma_{\geq j}(P^{\bullet}))$. Hence $\varphi$ induces
a surjective homomorphism of algebras
$$\bar{\varphi}: \End_{K(A)}(P^{\bullet}) \twoheadrightarrow
\End_{C(A)}(\sigma_{\geq j}(P^{\bullet}))/\rad
\End_{C(A)}(\sigma_{\geq j}(P^{\bullet})).$$

Now it is sufficient to show that $\Ker \bar{\varphi} = \rad
\End_{K(A)}(P^{\bullet})$. Clearly, $\Ker \bar{\varphi} \supseteq
\rad \End_{K(A)}(P^{\bullet})$. Conversely, for any
$\bar{f^{\bullet}} \in \Ker \bar{\varphi}$ with $f^{\bullet} \in
\linebreak \End_{C(A)}(P^{\bullet})$, we have
$\psi(\phi(f^{\bullet})) = \bar{\varphi}(\bar{f^{\bullet}}) = 0$.
Thus $\phi(f^{\bullet})$ is nilpotent, i.e., there exists $t \in
\mathbb{N}$ such that $(f^i)^t=0$ for all $i \geq j$. Since
$\sigma_{\leq j-1}(P^{\bullet})$ is a minimal projective resolution
of $\Ker d^j$, the restriction $\sigma_{\leq j-1}(f^{\bullet}) \in
\End_{C(A)}(\sigma_{\leq j-1}(P^{\bullet}))$ of $f^{\bullet}$ is a
lift of the restriction of $f^j$ on $\Ker d^j$. Thus $(\sigma_{\leq
j-1}(f^{\bullet}))^t$ is a lift of the restriction of $(f^j)^t$ on
$\Ker d^j$, i.e., a lift of zero morphism. Hence $(\sigma_{\leq
j-1}(f^{\bullet}))^t$ is a null homotopy. Therefore,
$\bar{f^{\bullet}}^t=0$, i.e., $\bar{f^{\bullet}}$ is nilpotent, in
$\End_{K(A)}(P^{\bullet})$. So $\Ker\,\bar{\varphi}$ is a nilpotent
ideal of $\End_{K(A)}(P^{\bullet})$. Consequently,
$\Ker\,\bar{\varphi}\subseteq \rad \End_{K(A)}(P^{\bullet})$.
\end{proof}

\begin{corollary} \label{gl-sgl} Let $A$ be an algebra. Then
$\mbox{\rm gl.dim} A \leq \mbox{\rm s.gl.dim} A$. \end{corollary}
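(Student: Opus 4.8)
The plan is to reduce the inequality to simple modules and then, from an infinite minimal projective resolution, to manufacture a family of \emph{bounded} indecomposable complexes of unbounded length. First I would invoke the standard identity $\gl A = \sup\{\pd S \mid S \text{ simple}\}$, so that it suffices to prove $\pd S \leq \mbox{\rm s.gl.dim}A$ for every simple module $S$. Fix such an $S$ and let $P^{\bullet} \in K^{-,b}(\proj A)$ be its minimal projective resolution, indexed cohomologically so that $H^0(P^{\bullet}) = S$ is the only nonzero cohomology and $P^i = 0$ for $i > 0$; thus $n := \min\{i \mid H^i(P^{\bullet}) \neq 0\} = 0$.

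The key preliminary observation is that $P^{\bullet}$ is indecomposable. Under the equivalence $K^{-,b}(\proj A) \simeq D^b(A)$ used in the proof of Proposition~\ref{prop-indec}, the complex $P^{\bullet}$ corresponds to the stalk complex $S$, which is indecomposable because $S$ is; hence so is $P^{\bullet}$ (equivalently, $\End_{K(A)}(P^{\bullet}) \cong \End_A(S) = k$ is local). Since $P^{\bullet}$ is moreover minimal, Proposition~\ref{prop-indec} applies and shows that every brutal truncation $\sigma_{\geq j}(P^{\bullet}) \in K^b(\proj A)$ with $j < 0$ is indecomposable. Each such truncation is again minimal, being a brutal truncation of a minimal complex, so its length can be read off directly from its nonzero terms.

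Now I would split according to $\pd S$. If $\pd S < \infty$, then $P^{\bullet}$ already lies in $K^b(\proj A)$, is indecomposable, and has length exactly $\pd S$ (the length of the minimal projective resolution of a module of finite projective dimension equals its projective dimension), so $\mbox{\rm s.gl.dim}A \geq \pd S$; this recovers the remark already recorded in the text. If $\pd S = \infty$, then $P^i \neq 0$ for all $i \leq 0$, and for each $j < 0$ the truncation $\sigma_{\geq j}(P^{\bullet})$ is an indecomposable minimal complex concentrated in degrees $j, \ldots, 0$ with both ends nonzero, hence of length $-j$; letting $j \to -\infty$ gives $\mbox{\rm s.gl.dim}A = \infty$. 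In either case $\pd S \leq \mbox{\rm s.gl.dim}A$, and taking the supremum over all simple $S$ yields $\gl A \leq \mbox{\rm s.gl.dim}A$. The only genuinely nontrivial point is the infinite-projective-dimension case: no single bounded complex witnesses the infinite global dimension, so one must produce arbitrarily long bounded indecomposable complexes, and this is precisely what Proposition~\ref{prop-indec} supplies by truncating the infinite minimal resolution. The remainder—the reduction to simples, the identification of $P^{\bullet}$ with the stalk $S$, and the length bookkeeping for brutal truncations—is routine.
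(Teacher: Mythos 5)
Your proof is correct and takes essentially the same route as the paper: both reduce to a simple module, dispose of the finite case via the remark that a minimal projective resolution of finite length realizes the projective dimension, and in the infinite case apply Proposition~\ref{prop-indec} to brutal truncations of the infinite minimal resolution to obtain indecomposable complexes in $K^b(\proj A)$ of arbitrarily large length. Your write-up simply makes explicit the details the paper leaves implicit (indecomposability of the resolution as the image of the stalk complex $S$, and the length bookkeeping for the truncations).
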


\begin{proof} We have known $\mbox{\rm gl.dim} A \leq
\mbox{\rm s.gl.dim} A$ if $\mbox{\rm gl.dim} A < \infty$. If
$\mbox{\rm gl.dim} A = \infty$ then there is a simple $A$-module $S$
of infinite projective dimension. Thus $S$ admits an infinite
minimal projective resolution. By Proposition~\ref{prop-indec},
there are indecomposable objects in $K^b(\proj A)$ of arbitrarily
large length, which implies $\mbox{\rm s.gl.dim} A = \infty$.
\end{proof}

\begin{remark}{\rm It is possible $\mbox{\rm gl.dim} A < \mbox{\rm s.gl.dim} A$.
Indeed, since piecewise hereditary algebras are factors of
finite-dimensional hereditary algebras \cite[Theorem 1.1]{HRS96},
all algebras of finite global dimension and with oriented cycles in
their quivers are not piecewise hereditary, thus of infinite strong
global dimension by \cite[Theorem 3.2]{HZ08}. }\end{remark}

As an additional corollary, we give a characterization of global
cohomological width on the level of bounded homotopy categories of
finite-dimensional projective modules.

\begin{corollary} \label{coro-refor-glhw} Let $A$ be an algebra. Then
$${\rm gl.hw} A=\sup\{\hw(P^\bullet)\; |\; P^\bullet
\; \mbox{\rm is (minimal) indecomposable in } K^b(\proj A) \}.$$
\end{corollary}

\begin{proof}
Clearly, the value of the right hand side of the equation is
invariant no matter whether we assume that the indecomposable
complex $P^\bullet \in K^b(\proj A)$ is minimal or not. Since
$K^b(\proj A) \subseteq D^b(A)$, the right hand side is not larger
than ${\rm gl.hw} A$. Conversely, by Proposition \ref{prop-indec},
any minimal indecomposable complex $P^\bullet \in K^{-,b}(\proj A)
\simeq D^b(A)$ has the property that $\hw(\sigma_{\geq
j}(P^\bullet))\geq \hw(P^\bullet)$ and $\sigma_{\geq j}(P^\bullet)
\in K^b(\proj A)$ is indecomposable for $j \ll 0$. Thus the right
hand side is not smaller than ${\rm gl.hw} A$.
\end{proof}

The following result implies that the global cohomological width can
provide an alternative definition of strong global dimension on the
level of bounded derived category.

\begin{proposition} \label{prop-eva-hw}
Let $A$ be an algebra. Then $\mbox{\rm gl.hw} A = \mbox{\rm
s.gl.dim} A.$
\end{proposition}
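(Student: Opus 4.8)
The plan is to prove the equality $\mbox{\rm gl.hw} A = \mbox{\rm s.gl.dim} A$ by showing the two quantities are computed by the same class of objects under the equivalence $K^b(\proj A) \simeq D^b(A)$ (more precisely via $K^{-,b}(\proj A)\simeq D^b(A)$). The key observation is that cohomological width measures the spread of \emph{nonzero cohomology}, while strong global dimension measures the spread of \emph{nonzero terms} in a minimal perfect complex. So the heart of the matter is to relate, for an indecomposable object, the ``cohomology window'' to the ``term window'' of its minimal representative. I expect to prove two inequalities.

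For the inequality $\mbox{\rm gl.hw} A \leq \mbox{\rm s.gl.dim} A$: given an indecomposable $X^{\bullet} \in D^b(A)$, I would represent it by a minimal complex $P^{\bullet} \in K^b(\proj A)$ (using that every object of $D^b(A)$ has a minimal perfect representative since $D^b(A)\simeq K^{-,b}(\proj A)$ and, for objects with bounded cohomology that are perfect, one lands in $K^b(\proj A)$). This $P^{\bullet}$ is again indecomposable. Now for any complex, the nonzero cohomology can only occur in degrees where there is a nonzero term, so the cohomology window is contained in the term window; hence $\hw(X^{\bullet}) = \hw(P^{\bullet}) \leq l(P^{\bullet})+1$. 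Taking suprema over indecomposables gives the bound — though I should be careful about the ``$+1$'' versus the off-by-one conventions in the definitions of $\hw$ (which uses $j-i+1$) and $l$ (which uses $j-i$); reconciling these conventions is a routine but necessary bookkeeping step.

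For the reverse inequality $\mbox{\rm s.gl.dim} A \leq \mbox{\rm gl.hw} A$, the difficulty is that a minimal perfect complex can have nonzero terms in degrees where the cohomology vanishes, so a priori the term window could be strictly larger than the cohomology window. Here is where I expect the \textbf{main obstacle} to lie, and where I would invoke minimality together with indecomposability. The plan is to argue that for an \emph{indecomposable} minimal complex $P^{\bullet}$, the term window and the cohomology window actually coincide: minimality ($\Im d^i \subseteq \rad P^{i+1}$) forbids ``padding'' the complex with acyclic summands, and indecomposability rules out nontrivially decomposing the complex so as to shave off contractible ends. Concretely, I would show that if $P^i \neq 0$ is the top (or bottom) nonzero term but the cohomology vanishes beyond some interior degree, then minimality forces the boundary maps at the ends to be split or to produce a nontrivial direct-sum decomposition, contradicting indecomposability; thus the extreme nonzero terms must carry nonzero cohomology. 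Once the windows coincide on indecomposables, we get $l(P^{\bullet}) = \hw(P^{\bullet}) - 1 \leq \mbox{\rm gl.hw} A - 1$ for each indecomposable perfect $P^\bullet$, and taking suprema (and tracking the off-by-one) yields $\mbox{\rm s.gl.dim} A \leq \mbox{\rm gl.hw} A$.

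Combining the two inequalities gives the desired equality. Throughout, the technical points to watch are: the identification $D^b(A) \simeq K^{-,b}(\proj A)$ and when one may descend to $K^b(\proj A)$ (i.e., restricting to objects of finite projective dimension, so both suprema implicitly range over perfect complexes); the consistent handling of the $+1$ discrepancy between the width and length conventions; and the endpoint argument that minimality plus indecomposability forces nonzero cohomology at the extreme nonzero degrees. I would organize the proof as the two inequalities above, with the endpoint/minimality argument as the crux.
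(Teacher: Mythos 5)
Your proposal contains a genuine gap, and it sits exactly at what you call the crux. The claim that ``for an indecomposable minimal complex the term window and the cohomology window coincide'' is false. Counterexample: take any indecomposable module $M$ with $0 < \pd M = d < \infty$ and let $P^{\bullet} \in K^b(\proj A)$ be its minimal projective resolution. This complex is minimal and indecomposable (its endomorphism ring is $\End_A(M)$, which is local), its term window has size $d+1$, but its cohomology is concentrated in a single degree, so $\hw(P^{\bullet})=1$. Minimality only forces nonzero cohomology at the \emph{top} end (since $\Im d^{-1} \subseteq \rad P^0$, the cokernel there is nonzero); at the \emph{bottom} end the differential can perfectly well be injective, which is precisely what happens for resolutions, and no contradiction with indecomposability arises. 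Consequently your pointwise identity $l(P^{\bullet}) = \hw(P^{\bullet})-1$ is untenable (it would force $\pd M = 0$ above), and the inequality $\mbox{\rm s.gl.dim}\, A \leq \mbox{\rm gl.hw}\, A$ cannot be proved complex-by-complex: for a single indecomposable, $l$ can exceed $\hw$ by as much as $\mbox{\rm gl.dim}\, A$. The paper's argument is inherently global: assuming $m := \mbox{\rm gl.hw}\, A < \mbox{\rm s.gl.dim}\, A$, it takes an indecomposable minimal $P^{\bullet}$ with $P^{-m-1} \neq 0$, notes $H^0 \neq 0$ (minimality at the top) and $H^i = 0$ for $i \leq -m$ (since $\hw \leq m$), and then invokes Proposition~\ref{prop-indec} to conclude that the brutal truncation $\sigma_{\geq -m}(P^{\bullet})$ is \emph{again indecomposable}; this truncation acquires new cohomology $H^{-m} = \Ker d^{-m} \neq 0$, hence has width $m+1$, contradicting the definition of $m$ applied to a \emph{different} indecomposable object. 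You never invoke (or reprove) anything like Proposition~\ref{prop-indec}, and without it the argument does not close.

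There is a second, smaller gap in the direction $\mbox{\rm gl.hw}\, A \leq \mbox{\rm s.gl.dim}\, A$: your comparison $\hw(P^{\bullet}) \leq l(P^{\bullet})+1$ only yields $\mbox{\rm gl.hw}\, A \leq \mbox{\rm s.gl.dim}\, A + 1$, and removing the $+1$ is not ``routine bookkeeping.'' The paper argues by contradiction: if some indecomposable minimal $P^{\bullet}$ of length $n = \mbox{\rm s.gl.dim}\, A$ had $\hw = n+1$, then its bottom term would carry cohomology, i.e.\ $\Ker d^{-n} \neq 0$; gluing a minimal projective resolution of $\Ker d^{-n}$ (bounded, since $\mbox{\rm gl.dim}\, A < \infty$ by Corollary~\ref{gl-sgl}) onto $P^{\bullet}$ produces a strictly longer minimal complex, which is indecomposable again by Proposition~\ref{prop-indec}, contradicting $\mbox{\rm s.gl.dim}\, A = n$. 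So both halves of the equality hinge on the truncation/gluing technology of Proposition~\ref{prop-indec}, which is the ingredient missing from your plan.
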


\begin{proof} First we show $\mbox{\rm gl.hw} A \geq \mbox{\rm s.gl.dim} A$.
It suffices to prove that for any minimal indecomposable complex
$P^{\bullet} \in K^b(\proj A)$ with $l(P^{\bullet})=n$, there is an
indecomposable complex $Q^\bullet \in  K^b(\proj A)$ such that
$\hw(Q^\bullet) \geq n$. Without loss of generality, we assume
$$P^{\bullet} = 0 \longrightarrow P^{-n} \stackrel{d^{-n}}{\longrightarrow}
P^{-n+1} \stackrel{d^{-n+1}}{\longrightarrow} \cdots
\stackrel{d^{-2}}{\longrightarrow} P^{-1}
\stackrel{d^{-1}}{\longrightarrow} P^0 \longrightarrow 0.$$ Since
$P^{\bullet}$ is minimal, we have $H^0(P^\bullet)\neq 0$. If
$H^{-n}(P^\bullet) \neq 0$ or $H^{-n+1}(P^\bullet) \neq 0$ then
$\hw(P^\bullet) \geq n$. Thus $Q^\bullet := P^\bullet$ is as
required. If $H^{-n}(P^\bullet) = 0 = H^{-n+1}(P^\bullet)$ then
$Q^\bullet := \sigma_{\geq -n+1}(P^\bullet)$ is as required, since
it is indecomposable by Proposition \ref{prop-indec} and
$\hw(Q^\bullet)=n$ .

Next we show $\mbox{\rm s.gl.dim} A \geq \mbox{\rm gl.hw} A$. By
Corollary \ref{coro-refor-glhw}, it is enough to show that for any
minimal indecomposable complex $P^{\bullet} \in K^b(\proj A)$, there
is a minimal indecomposable complex $Q^\bullet \in K^b(\proj A)$
such that $l(Q^\bullet) \geq \hw(P^\bullet)$. Without loss of
generality, we still assume that $P^\bullet$ is of the above form.
If $H^{-n}(P^{\bullet})=0$ then $l(P^\bullet) \geq \hw(P^\bullet)$.
Thus $Q^\bullet := P^\bullet$ is as required. If $H^{-n}(P^\bullet)
\cong \Ker d^{-n} \neq 0$, we take a minimal projective resolution
of $\Ker d^{-n}$, say
$$P'^{\bullet} = \cdots \longrightarrow P^{-n-2}
\stackrel{d^{-n-2}}{\longrightarrow} P^{-n-1} \longrightarrow 0.$$
Gluing $P'^{\bullet}$ and $P^{\bullet}$ together, we get a minimal
complex
$$P''^{\bullet} = \cdots \longrightarrow P^{-n-2}
\stackrel{d^{-n-2}}{\longrightarrow} P^{-n-1}
\stackrel{d^{-n-1}}{\longrightarrow} P^{-n}
\stackrel{d^{-n}}{\longrightarrow} \cdots
\stackrel{d^{-1}}{\longrightarrow} P^0 \longrightarrow 0,$$ where
$d^{-n-1}$ is the composition $P^{-n-1} \twoheadrightarrow \Ker
d^{-n} \hookrightarrow P^{-n}$. Since $P^{\bullet} = \sigma_{\geq
-n}(P''^{\bullet})$ is indecomposable and $H^i(P''^{\bullet}) =0$
for all $i \leq -n$, by Proposition~\ref{prop-indec},
$P''^{\bullet}$ is indecomposable. Also by
Proposition~\ref{prop-indec}, we have $Q^\bullet := \sigma_{\geq
-n-1}(P''^\bullet)$ is indecomposable with $l(Q^\bullet) = n+1 =
\hw(P^\bullet)$.
\end{proof}

\medskip

Recall that an algebra $A$ is said to be {\it piecewise hereditary}
if there is a triangle equivalence $D^b(A) \simeq D^b(\mathcal{H})$
for some hereditary abelian $k$-category $\mathcal{H}$ (Ref.
\cite{HRS96}). In this case, $\mathcal{H}$ must have a tilting
object \cite{HR98}. Thus there are exactly two classes of piecewise
hereditary algebras whose derived categories are triangle equivalent
to either $D^b(kQ)$ for some finite connected quiver $Q$ without
oriented cycles, or $D^b(\mbox{coh} \mathbb{X})$ for some weighted
projective line $\mathbb{X}$ (Ref. \cite{Hap01}).

As a corollary of Proposition~\ref{prop-eva-hw}, piecewise
hereditary algebras can be characterized as the algebras of finite
global cohomological width.

\begin{corollary} \label{coro-char-phere}
An algebra $A$ is piecewise hereditary if and only if $\mbox{\rm
gl.hw} A < \infty$.
\end{corollary}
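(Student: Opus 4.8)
The plan is to deduce the corollary directly from Proposition~\ref{prop-eva-hw} together with the characterization of piecewise hereditary algebras due to Happel and Zacharia \cite{HZ08}. That theorem asserts that an algebra $A$ is piecewise hereditary if and only if $\mbox{\rm s.gl.dim}\,A < \infty$, and since we have adopted exactly the notion of strong global dimension used in \cite{HZ08}, it applies without modification. By Proposition~\ref{prop-eva-hw} we have $\mbox{\rm gl.hw}\,A = \mbox{\rm s.gl.dim}\,A$, so $\mbox{\rm gl.hw}\,A < \infty$ if and only if $\mbox{\rm s.gl.dim}\,A < \infty$. Chaining the two equivalences gives the statement, so the proof is essentially a one-line combination requiring no new argument of its own.

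Because the only genuine input is the external Happel--Zacharia theorem, the real content of the corollary sits in Proposition~\ref{prop-eva-hw}, which is already established. I would nonetheless record a short sanity check on the ``only if'' direction that avoids strong global dimension altogether: if $A$ is piecewise hereditary then $D^b(A) \simeq D^b(\mathcal{H})$ for a hereditary abelian $k$-category $\mathcal{H}$, and in the derived category of a hereditary category every indecomposable object is a shift of an object of $\mathcal{H}$, so every indecomposable complex has cohomology concentrated in a single degree and hence cohomological width $1$. Replacing $\mathcal{H}$ by a derived-equivalent finite-dimensional algebra, which exists for both families listed in \cite{Hap01}, the finiteness of global cohomological width then transfers to $A$ by Corollary~\ref{coro-equi-h}.

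The main obstacle, such as it is, is purely bookkeeping: confirming that the strong global dimension of \cite{HZ08} coincides with ours, which the text has already arranged, and checking that both exceptional families of piecewise hereditary algebras are covered. Since Proposition~\ref{prop-eva-hw} translates between $\mbox{\rm gl.hw}$ and $\mbox{\rm s.gl.dim}$ uniformly for every algebra, the main route needs no case distinction and the corollary follows at once.
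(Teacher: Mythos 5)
Your proof is correct and follows exactly the paper's own route: the paper's proof of Corollary~\ref{coro-char-phere} is the same one-line combination of the Happel--Zacharia characterization (\cite[Theorem 3.2]{HZ08}, piecewise hereditary if and only if $\mbox{\rm s.gl.dim}\,A < \infty$) with Proposition~\ref{prop-eva-hw} ($\mbox{\rm gl.hw}\,A = \mbox{\rm s.gl.dim}\,A$). The extra sanity check you record for the ``only if'' direction is sound but not part of the paper's argument.
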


\begin{proof} It follows from \cite[Theorem 3.2]{HZ08}
and Proposition~\ref{prop-eva-hw}. \end{proof}

\subsection{\normalsize The first Brauer-Thrall type theorem}

\begin{definition}{\rm An algebra $A$ is said to be {\it derived bounded} if $\mbox{\rm gl.hr}
A<\infty$, i.e., the cohomological ranges of all indecomposable
objects in $D^b(A)$ have a common upper bound. }\end{definition}

Recall that an algebra $A$ is said to be {\it derived finite} if up
to shift and isomorphism there are only finitely many indecomposable
objects in $D^b(A)$ (Ref. \cite{BM03}). Now we can prove Theorem I,
another proof of which will be given at the end of this paper
(Remark~\ref{Remark-AnotherProofOfTheoremI}).

\begin{theorem}
\label{theorem-dbt1} Let $A$ be an algebra. Then the following
assertions are equivalent:

{\rm (1)} $A$ is derived bounded;

{\rm (2)} $A$ is derived finite;

{\rm (3)} $A$ is piecewise hereditary of Dynkin type.
\end{theorem}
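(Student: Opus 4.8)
The plan is to establish the cycle of implications $(1) \Rightarrow (3) \Rightarrow (2) \Rightarrow (1)$, with the first implication carrying all the weight. The two closing implications are quick. For $(2) \Rightarrow (1)$: if there are only finitely many indecomposables up to shift and isomorphism, then since $\hr$ is invariant under shift and isomorphism, the set of cohomological ranges attained by indecomposables is finite, so $\mbox{\rm gl.hr}\, A < \infty$. For $(3) \Rightarrow (2)$: a Dynkin-type piecewise hereditary algebra satisfies $D^b(A) \simeq D^b(kQ)$ with $Q$ a Dynkin quiver; since $kQ$ is hereditary, every indecomposable object of $D^b(kQ)$ is a shift of an indecomposable stalk complex by \cite[I.5.2 Lemma]{Hap88}, and as $kQ$ is representation-finite there are only finitely many such stalks, whence $A$ is derived finite.

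For the main implication $(1) \Rightarrow (3)$, I would first observe that derived-boundedness forces both $\mbox{\rm gl.hw}\, A < \infty$ and $\mbox{\rm gl.hl}\, A < \infty$: for any nonzero indecomposable $X^{\bullet}$ one has $\hl(X^{\bullet}), \hw(X^{\bullet}) \geq 1$, so $\hr(X^{\bullet}) \geq \hw(X^{\bullet})$ and $\hr(X^{\bullet}) \geq \hl(X^{\bullet})$, and a common bound on $\hr$ bounds each factor. From $\mbox{\rm gl.hw}\, A < \infty$ and Corollary~\ref{coro-char-phere}, $A$ is piecewise hereditary. By Happel's classification of piecewise hereditary algebras recalled before Corollary~\ref{coro-char-phere}, either $D^b(A) \simeq D^b(kQ)$ for a finite connected quiver $Q$ without oriented cycles, or $D^b(A) \simeq D^b(\mbox{coh}\,\mathbb{X})$ for a weighted projective line $\mathbb{X}$; in the latter case $A$ is derived equivalent to a canonical algebra $\Lambda$.

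It remains to pin down which hereditary model occurs, and here $\mbox{\rm gl.hl}\, A < \infty$ does the work. Since finiteness of $\mbox{\rm gl.hl}$ is a derived invariant by Corollary~\ref{coro-equi-h}, any algebra $B$ derived equivalent to $A$ satisfies $\mbox{\rm gl.hl}\, B < \infty$; by Roiter's theorem applied to the stalk complexes (as noted just after the definition of $\mbox{\rm gl.hl}$), such a $B$ is representation-finite. Taking $B = kQ$ shows $kQ$ is representation-finite, so $Q$ must be Dynkin. Taking $B = \Lambda$ in the weighted-projective-line case yields a contradiction, since every canonical algebra is representation-infinite (its module category carries the tubular families of $\mbox{coh}\,\mathbb{X}$); thus that case cannot occur. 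Hence $D^b(A) \simeq D^b(kQ)$ with $Q$ Dynkin, i.e. $A$ is piecewise hereditary of Dynkin type.

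The main obstacle is precisely this passage from ``piecewise hereditary'' to ``Dynkin type,'' that is, excluding the tame and wild hereditary quivers and the weighted projective lines. The crux is recognizing that $\mbox{\rm gl.hr} < \infty$ secretly encodes the length bound $\mbox{\rm gl.hl} < \infty$, and then exploiting the derived invariance of this bound together with Brauer--Thrall I to force representation-finiteness on a hereditary or canonical model, which is exactly the Dynkin condition. The only point demanding care is the weighted-projective-line case: one must pass to the canonical algebra $\Lambda$ with $D^b(\Lambda) \simeq D^b(\mbox{coh}\,\mathbb{X})$ so that Corollary~\ref{coro-equi-h} and Roiter's theorem apply to an honest finite-dimensional module category, producing the desired contradiction with representation-finiteness.
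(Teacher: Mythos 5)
Your proposal is correct and follows essentially the same route as the paper: derive $\mbox{\rm gl.hw}\,A < \infty$ from $\mbox{\rm gl.hr}\,A < \infty$, invoke Corollary~\ref{coro-char-phere} and Happel's dichotomy, then use the derived invariance from Corollary~\ref{coro-equi-h} together with Roiter's theorem (Brauer--Thrall I) to force the Dynkin case and exclude the weighted projective line case via the representation-infinite canonical algebra. The only cosmetic difference is that you track $\mbox{\rm gl.hl}$ where the paper tracks $\mbox{\rm gl.hr}$ through the derived equivalence, which changes nothing of substance.
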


\begin{proof} (1) $\Rightarrow$ (3): By assumption,
$\mbox{\rm gl.hr} A < \infty$. Thus $\mbox{\rm gl.hw} A < \infty$.
It follows from Corollary \ref{coro-char-phere} that $A$ is
piecewise hereditary. By \cite[Theorem 3.1]{Hap01}, $D^b(A) \simeq
D^b(kQ)$ for some finite connected quiver $Q$ without oriented
cycles, or $D^b(A) \simeq D^b(\mbox{coh}\mathbb{X})$ for some
weighted projective line $\mathbb{X}$. In the first case, by
Corollary~\ref{coro-equi-h} we have $\mbox{\rm gl.hr} \;kQ <
\infty$. Hence $Q$ is a Dynkin quiver. In the second case, by
\cite[Theorem 3.2]{GL87}, $D^b(A)$ is triangle equivalent to
$D^b(C)$ for a canonical algebra $C$. Since $C$ is
representation-infinite, $\mbox{\rm gl.hr} C = \infty$. By
Corollary~\ref{coro-equi-h}, we have $\mbox{\rm gl.hr} A = \infty$,
which is a contradiction.

(3) $\Rightarrow$ (2): This is well-known \cite{Hap88}.

(2) $\Rightarrow$ (1): Trivial.
\end{proof}

\section{\large The second Brauer-Thrall type theorem}

\subsection{\normalsize Strongly derived unbounded algebras}

Recall that the {\it cohomology dimension vector} of a complex
$X^{\bullet} \in D^b(A)$ is $\mathbf{d}(X^{\bullet}) := (\dim
H^n(X^{\bullet}))_{n \in \mathbb{Z}} \in \mathbb{N}^{(\mathbb{Z})}$.
An algebra $A$ is said to be {\it derived discrete} if for any
$\mathbf{d} \in \mathbb{N}^{(\mathbb{Z})}$, up to isomorphism, there
are only finitely many indecomposable objects in $D^b(A)$ of
cohomology dimension vector $\mathbf{d}$ (Ref. \cite{Vo01}). It is
easy to see that an algebra $A$ is derived discrete if and only if
for any $r \in \mathbb{N}$, up to shift and isomorphism, there are
only finitely many indecomposable objects in $D^b(A)$ of
cohomological range $r$.

\begin{definition}{\rm An algebra $A$ is said to be {\it strongly derived unbounded} if
there is an (strictly) increasing sequence $\{r_i \; | \; i \in
\mathbb{N}\} \subseteq \mathbb{N}$ such that for each $r_i$, up to
shift and isomorphism, there are infinitely many indecomposable
objects in $D^b(A)$ of cohomological range $r_i$.} \end{definition}

Note that all representation-infinite algebras are strongly
unbounded due to the truth of Brauer-Thrall conjecture II, thus
strongly derived unbounded. Moreover, it is impossible that an
algebra is both derived discrete and strongly derived unbounded.

Now we show that derived equivalences preserve strongly derived
unboundedness.

\begin{proposition} \label{prop-der-sdu} Let two algebras $A$ and $B$
be derived equivalent. Then $A$ is strongly derived unbounded if and
only if so is $B$.
\end{proposition}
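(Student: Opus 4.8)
The plan is to show that strong derived unboundedness transfers across a derived equivalence $F = - \otimes^L_A T^{\bullet}_B : D^b(A) \to D^b(B)$ by exploiting the fact that $F$ is an equivalence of categories, hence sends indecomposables to indecomposables bijectively, and that by Proposition~\ref{prop-equi-h} it distorts the cohomological range only by a bounded multiplicative factor. Since the statement is symmetric in $A$ and $B$, it suffices to prove one direction: assuming $A$ is strongly derived unbounded, I would deduce that $B$ is too. So I would fix the infinite increasing sequence $\{r_i \mid i \in \mathbb{N}\} \subseteq \mathbb{N}$ witnessing the strong derived unboundedness of $A$, so that for each $r_i$ there are infinitely many indecomposables in $D^b(A)$ of cohomological range $r_i$, pairwise non-isomorphic up to shift.

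The key mechanism is as follows. For each fixed $r_i$, pick an infinite family $\{X^{\bullet}_{i,j} \mid j \in \mathbb{N}\}$ of indecomposables of cohomological range $r_i$ in $D^b(A)$, pairwise distinct up to shift and isomorphism. Because $F$ is an equivalence, the images $\{F(X^{\bullet}_{i,j}) \mid j\}$ are again indecomposable, pairwise distinct up to shift and isomorphism (a triangle equivalence commutes with the shift functor, so it induces a bijection on shift-orbits). By Proposition~\ref{prop-equi-h}(3), each $\hr(F(X^{\bullet}_{i,j})) \leq N_2 \cdot r_i$, where $N_2$ depends only on $F$, not on $i$ or $j$. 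Thus for each $i$, the infinitely many objects $F(X^{\bullet}_{i,j})$ all have cohomological range lying in the finite set $\{1, 2, \ldots, N_2 r_i\}$; by the pigeonhole principle there is some value $s_i \in \{1, \ldots, N_2 r_i\}$ attained by infinitely many of them, giving infinitely many indecomposables in $D^b(B)$ of cohomological range $s_i$ up to shift and isomorphism.

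The remaining point is to extract from these $s_i$ an \emph{infinite increasing} subsequence as required by the definition of strong derived unboundedness; this is where a little care is needed, and it is the one step that is not purely formal. I would argue that the set $S := \{s_i \mid i \in \mathbb{N}\}$ must be infinite: if it were finite, then by another pigeonhole there would be a single range $s$ realized by $s_i$ for infinitely many $i$, but the families for distinct $i$ draw from disjoint (up to shift) pools of $A$-indecomposables—one can arrange the original families $\{X^{\bullet}_{i,j}\}$ across different $i$ to be pairwise non-isomorphic up to shift, so that an $s$ realized by infinitely many distinct $j$ for infinitely many $i$ still yields infinitely many $B$-indecomposables, which already suffices. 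In fact the cleanest route is to observe that $S$ being infinite is automatic because the $r_i$ increase without bound while $s_i \geq 1$, and infinitely many distinct cohomological ranges each carrying infinitely many indecomposables immediately provide the desired strictly increasing sequence by passing to a subsequence of $S$. Thus the main obstacle is bookkeeping: ensuring the chosen families are genuinely distinct up to shift so that the pigeonhole argument does not collapse infinitely many range-$r_i$ families into a single range on the $B$-side. Once that is handled, the symmetry of the derived equivalence finishes the converse direction identically.
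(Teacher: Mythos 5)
Your overall strategy (transport indecomposables by the equivalence, use Proposition~\ref{prop-equi-h}(3) to control ranges, pigeonhole within each family) matches the paper's, but the step you yourself flag as ``not purely formal'' is a genuine gap, and both of your attempted repairs fail. First, the claim that the set $S=\{s_i\}$ ``being infinite is automatic because the $r_i$ increase without bound while $s_i \geq 1$'' is a non sequitur: with only the upper bound $\hr(F(X^{\bullet}_{i,j})) \leq N_2 r_i$ in hand, nothing prevents every image from having cohomological range $1$, i.e.\ $s_i = 1$ for all $i$. Second, your fallback --- that a single value $s$ carrying infinitely many indecomposables of $D^b(B)$ ``already suffices'' --- contradicts the definition used in the paper: strong derived unboundedness requires an infinite \emph{increasing} sequence of ranges, each carried by infinitely many indecomposables up to shift and isomorphism. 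One such range only shows $B$ is not derived discrete, and passing from ``not derived discrete'' to ``strongly derived unbounded'' is precisely Theorem~\ref{thm-dBT2}, which is proved later and relies on this very proposition, so invoking it here would be circular.

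The missing ingredient is a \emph{lower} bound on the range of images. Apply Proposition~\ref{prop-equi-h}(3) to a quasi-inverse $G$ of $F$ (which is also given by a two-sided tilting complex): from $\hr(X^{\bullet}) = \hr(GF(X^{\bullet})) \leq N' \cdot \hr(F(X^{\bullet}))$ one gets $\hr(F(X^{\bullet})) \geq \frac{1}{N'}\hr(X^{\bullet})$. This is exactly what the paper does. With the two-sided bound, the ranges of the images of the $i$-th family lie in the interval $\left[\frac{r_i}{N'},\, N r_i\right]$ and hence tend to infinity with $i$; the increasing sequence $r'_1 < r'_2 < \cdots$ on the $B$-side is then built inductively: having chosen $r'_i$, pick $l$ with $r_l > N' r'_i$, so that every $F(X^{\bullet}_{l,j})$ has range $> r'_i$ and $\leq N r_l$, and pigeonhole within this single family yields $r'_{i+1} > r'_i$ attained by infinitely many indecomposables, pairwise distinct up to shift and isomorphism. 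Your bookkeeping worry about families colliding across different $i$ is, by contrast, vacuous: complexes with distinct cohomological ranges are automatically non-isomorphic even up to shift, since cohomological range is a shift-and-isomorphism invariant.
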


\begin{proof} Let $_AT^{\bullet}_B$ be a two-sided tilting complex
such that $F = - \otimes^L_A T^{\bullet}_B : D^b(A) \rightarrow
D^b(B)$ is a derived equivalence. Assume that $A$ is strongly
derived unbounded. Then there exist an increasing sequence $\{r_i \;
| \; i \in \mathbb{N}\} \subseteq \mathbb{N}$ and infinitely many
indecomposable objects $\{X_{ij}^{\bullet} \in D^b(A) \; | \; i, j
\in \mathbb{N}\}$ which are pairwise different up to shift and
isomorphism such that $\hr(X_{ij}^{\bullet}) = r_i$ for all $j \in
\mathbb{N}$. It follows from Proposition \ref{prop-equi-h} (3) that
there exist two positive integers $N$ and $N'$, such that for any
$X_{ij}^{\bullet}$,
$$\frac{1}{N'} \cdot \hr(X_{ij}^{\bullet}) \leq \hr(F(X_{ij}^{\bullet})) \leq N \cdot \hr(X_{ij}^{\bullet}).$$
In order to show that $B$ is strongly derived unbounded, we shall
find inductively an increasing sequence $\{r'_i \; | \; i \in \mathbb{N}\} \subseteq \mathbb{N}$ and
infinitely many indecomposable objects $\{Y_{ij}^{\bullet}\in D^b(B)
\; | \; i, j \in \mathbb{N}\}$ which are pairwise different up to
shift and isomorphism such that $\hr(Y_{ij}^{\bullet}) = r'_i$ for
all $j \in \mathbb{N}$. For $i=1$, we have $0<
\hr(F(X_{1j}^{\bullet})) \leq N \cdot \hr(X_{1j}^{\bullet}) = N
\cdot r_1$. Since $F(X_{1j}^{\bullet}), j \in \mathbb{N}$, are also
pairwise different indecomposable objects up to shift and
isomorphism, we can choose $0< r'_1 \leq Nr_1$ and infinitely many
indecomposable objects $\{Y^{\bullet}_{1j} \; | \; j \in
\mathbb{N}\} \subseteq \{F(X_{1j}^{\bullet}) \; | \; j \in
\mathbb{N}\} $ which are pairwise different up to shift and
isomorphism such that $\hr(Y^{\bullet}_{1j}) = r'_1$ for all $j \in
\mathbb{N}$. Assume that we have found $r'_i$. We choose some $r_l$
with $r_l > N' \cdot r'_i$. Since
$$r'_i < \frac{1}{N'} \cdot r_l = \frac{1}{N'} \cdot \hr(X_{lj}^{\bullet})
\leq \hr(F(X_{lj}^{\bullet})) \leq N \cdot \hr(X_{lj}^{\bullet}) = N
\cdot r_l,$$ we can choose $r'_i < r'_{i+1} \leq N \cdot r_l$ and
infinitely many indecomposable objects $\{Y^{\bullet}_{i+1,j} \; |
\; j \in \mathbb{N}\} \subseteq \{F(X_{lj}^{\bullet}) \; | \; j \in
\mathbb{N}\}$ which are pairwise different up to shift and
isomorphism such that $\hr(Y^{\bullet}_{i+1,j}) = r'_{i+1}$ for all
$j \in \mathbb{N}$.
\end{proof}

\subsection{\normalsize Cleaving functors}

Cleaving functors were introduced in \cite{BGRS85} as a tool for
proving that certain algebras are representation-infinite. In this
part, we will observe the relations between cleaving functors and
strongly derived unboundedness of algebras.

In order to use cleaving functors, one needs to view basic
finite-dimensional algebras or bound quiver algebras as finite
spectroids. Recall that a {\it locally bounded spectroid}
\cite{GR97} (=locally bounded category \cite{BoG82}) is a small
$k$-linear category $A$ satisfying:

(1) different objects in $A$ are not isomorphic;

(2) the endomorphism algebra $A(a,a)$ is local for all $a \in A$;

(3) $\sum_{x \in A} \dim \; A(a,x) < \infty$ and $\sum_{x \in A}
\dim \; A(x, a) < \infty$ for all $a \in A$.

\noindent A {\it finite spectroid} is a locally bounded spectroid
with finitely many objects. Let $A$ be a finite spectroid. A {\it
right $A$-module} $M$ is just a covariant $k$-functor from $A$ to
the category of $k$-vector spaces. The {\it dimension} of $M$ is
$\dim M := \sum_{a \in A}\dim M(a)$. Denote by $\mod A$ the category
of finite-dimensional right $A$-modules. The indecomposable
projective $A$-modules are $P_a=A(a, -)$ and indecomposable
injective $A$-modules are $I_a=DA(-, a)$ for all $a \in A$, where
$D=\Hom_k(-, k)$. A bound quiver algebra $kQ/I$ with $Q$ a finite
quiver and $I$ an admissible ideal can be viewed as a finite
spectroid $A$ by taking the vertices in $Q_0$ as objects and the
$k$-linear combinations of paths in $kQ/I$ as morphisms. Conversely,
a finite spectroid $A$ admits a presentation $kQ/I
\stackrel{\sim}{\rightarrow} A$ for a finite quiver $Q$ and an
admissible ideal $I$ (Ref. \cite[Chapter 8]{GR97}). In these cases,
$kQ/I$ and $A$ have equivalent (finite-dimensional) module
categories. {\it Throughout this section, we do not differentiate
the terminologies ``(basic finite-dimensional) algebra'', ``bound
quiver algebra'' and ``finite spectroid''. In particular, all the
concepts and notations concerning module category defined for a
bound quiver algebra make sense for a finite spectroid}.

To a $k$-functor $F: B \rightarrow A$ between finite spectroids, we
associates a {\it restriction functor} $F_{\ast}: \mod A \rightarrow
\mod B$, which is given by $F_{\ast}(M) = M \circ F$ and exact. The
restriction functor $F_{\ast}$ admits a left adjoint functor
$F^{\ast}$, called the {\it extension functor}, which sends a
projective $B$-module $B(b, -)$ to a projective $A$-module $A(Fb,
-)$. If $\gl B < \infty$ then $F_{\ast}$ extends naturally to a
derived functor $F_{\ast}: D^b(A) \rightarrow D^b(B)$, which has a
left adjoint $\mathbf{L}F^{\ast}: D^b(B)\rightarrow D^b(A)$. Note
that $\mathbf{L}F^{\ast}$ is the left derived functor associated
with $F^{\ast}$ and maps $K^b(\proj B)$ into $K^b(\proj A)$. We
refer to \cite{W94} for the definition of derived functors.

A $k$-functor $F: B \rightarrow A$ between finite spectroids with
$\gl B < \infty$ is called a {\it cleaving functor}
\cite{BGRS85,Vo01} if it satisfies the following equivalent
conditions:

(1) The linear map $B(b,b') \rightarrow A(Fb,Fb')$ associated with
$F$ admits a natural retraction for all $b,b' \in B$;

(2) The adjunction morphism $\phi_M: M \rightarrow (F_{\ast} \circ
F^{\ast})(M)$ admits a natural retraction for all $M \in \mod B$;

(3) The adjunction morphism $\Phi_{X^{\bullet}}: X^{\bullet}
\rightarrow (F_{\ast} \circ \mathbf{L}F^{\ast})(X^{\bullet})$ admits
a natural retraction for all $X^{\bullet} \in D^b(B)$.

\begin{proposition}
\label{prop-cf-sdu} Let $F: B\rightarrow A$ be a cleaving functor
between finite spectroids with $\mbox{\rm gl.dim} B < \infty$. Then
the following assertions hold:

{\rm (1)} If $B$ is strongly derived unbounded then so is $A$.

{\rm (2)} If ${\rm gl.hl} A < \infty$ {\rm (}resp. ${\rm gl.hw} A <
\infty$, ${\rm gl.hr} A < \infty${\rm )}, then ${\rm gl.hl} B <
\infty$ {\rm (}resp. ${\rm gl.hw} B < \infty$, ${\rm gl.hr} B <
\infty${\rm )}.
\end{proposition}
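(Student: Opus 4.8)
The plan is to reduce everything to a single indecomposable object on the $A$-side by combining the cleaving property with the Krull--Schmidt property of $D^b(A)$ and $D^b(B)$, and then to compare cohomological invariants across $F_{\ast}$ and $\mathbf{L}F^{\ast}$. First I would record two elementary monotonicity facts. Since the restriction functor $F_{\ast}$ is exact, $H^i(F_{\ast}W^{\bullet})=F_{\ast}(H^i(W^{\bullet}))$; writing $N$ for the number of objects of $B$ and using $\dim F_{\ast}(M)=\sum_{b}\dim_k M(Fb)\le N\dim M$, this yields $\hl(F_{\ast}W^{\bullet})\le N\,\hl(W^{\bullet})$, while the cohomological support can only shrink, so $\hw(F_{\ast}W^{\bullet})\le\hw(W^{\bullet})$ and hence $\hr(F_{\ast}W^{\bullet})\le N\,\hr(W^{\bullet})$. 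Secondly, passing to a direct summand cannot increase any of $\hl,\hw,\hr$. The crucial structural step is this: given an indecomposable $X^{\bullet}\in D^b(B)$, the cleaving condition (3) makes $\Phi_{X^{\bullet}}$ a split monomorphism, so $X^{\bullet}$ is a direct summand of $F_{\ast}\mathbf{L}F^{\ast}X^{\bullet}$; decomposing $\mathbf{L}F^{\ast}X^{\bullet}=\bigoplus_k Z_k^{\bullet}$ into indecomposables in the Krull--Schmidt category $D^b(A)$ gives $F_{\ast}\mathbf{L}F^{\ast}X^{\bullet}=\bigoplus_k F_{\ast}Z_k^{\bullet}$, and uniqueness of Krull--Schmidt decompositions in $D^b(B)$ forces $X^{\bullet}$ to be a summand of a single $F_{\ast}Z^{\bullet}$ with $Z^{\bullet}$ indecomposable. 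Part (2) is then immediate: if, say, ${\rm gl.hl}\,A<\infty$, then $\hl(X^{\bullet})\le\hl(F_{\ast}Z^{\bullet})\le N\,\hl(Z^{\bullet})\le N\,{\rm gl.hl}\,A$, and likewise for $\hw$ and $\hr$.

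For part (1) I need, in addition, an upper bound for the cohomological range of $Z^{\bullet}$ in terms of that of $X^{\bullet}$; since $Z^{\bullet}$ is a summand of $\mathbf{L}F^{\ast}X^{\bullet}$, it suffices to bound $\hr(\mathbf{L}F^{\ast}X^{\bullet})$. Here is where $\gl B<\infty$ enters decisively, in two independent estimates. For the width, represent $X^{\bullet}$ by its minimal complex $P^{\bullet}\in K^b(\proj B)$, so that $\mathbf{L}F^{\ast}X^{\bullet}=F^{\ast}(P^{\bullet})$ has exactly the same nonzero degrees; minimality puts the top nonzero term in the top cohomological degree, while the part of $P^{\bullet}$ below the cohomological support is a minimal projective resolution of an image module, of length at most $\gl B$. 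This gives $\hw(\mathbf{L}F^{\ast}X^{\bullet})\le\hw(X^{\bullet})+\gl B+1$. For the length, set $\Phi(M):=\sum_{p}\dim L_pF^{\ast}(M)=\sum_n\dim H^n(\mathbf{L}F^{\ast}M)$ for a module $M$; the long exact sequences coming from short exact sequences of modules show $\Phi$ is subadditive, and since $\gl B<\infty$ each simple $S$ has $\Phi(S)<\infty$, whence $\Phi(M)\le\kappa\dim M$ with $\kappa:=\max_S\Phi(S)$. Dévissage of an arbitrary $X^{\bullet}$ along its truncation triangles into the stalks $H^q(X^{\bullet})[-q]$, together with subadditivity of $Y^{\bullet}\mapsto\sum_n\dim H^n(\mathbf{L}F^{\ast}Y^{\bullet})$ on triangles, then yields $\sum_n\dim H^n(\mathbf{L}F^{\ast}X^{\bullet})\le\kappa\sum_q\dim H^q(X^{\bullet})\le\kappa\,\hr(X^{\bullet})$, so in particular $\hl(\mathbf{L}F^{\ast}X^{\bullet})\le\kappa\,\hr(X^{\bullet})$. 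Combining, $\hr(\mathbf{L}F^{\ast}X^{\bullet})\le(\hw(X^{\bullet})+\gl B+1)\,\kappa\,\hr(X^{\bullet})$, a quantity bounded in terms of $\hr(X^{\bullet})$ alone.

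Finally I would run the counting argument. Suppose $B$ is strongly derived unbounded, with $r_i\nearrow\infty$ and, for each $i$, infinitely many pairwise non-isomorphic (up to shift) indecomposables $X_{ij}^{\bullet}$ of range $r_i$. Choose for each $X_{ij}^{\bullet}$ an indecomposable $Z_{ij}^{\bullet}$ as above, so that $X_{ij}^{\bullet}$ is a summand of $F_{\ast}Z_{ij}^{\bullet}$ and $Z_{ij}^{\bullet}$ is a summand of $\mathbf{L}F^{\ast}X_{ij}^{\bullet}$. Because $F_{\ast}$ commutes with shifts and each $F_{\ast}Z^{\bullet}$ has only finitely many indecomposable summands, a single $Z^{\bullet}$ (up to shift and isomorphism) can account for only finitely many $X_{ij}^{\bullet}$; hence for each fixed $i$ the $Z_{ij}^{\bullet}$ comprise infinitely many objects up to shift and isomorphism. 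Their ranges lie in the finite window $r_i/N\le\hr(Z_{ij}^{\bullet})\le(r_i+\gl B+1)\kappa\,r_i$ (the lower bound from $\hr(X_{ij}^{\bullet})\le N\,\hr(Z_{ij}^{\bullet})$, the upper from the previous paragraph), so by pigeonhole some value $\rho_i$ in this window is attained by infinitely many of them. Since $\rho_i\ge r_i/N\to\infty$, passing to a strictly increasing subsequence of $\{\rho_i\}$ exhibits $A$ as strongly derived unbounded. The main obstacle is exactly this upper bound on $\hr(\mathbf{L}F^{\ast}X^{\bullet})$: the length estimate cannot be obtained from the Geiss--Krause lemma as in Proposition~\ref{prop-equi-h}, since the relevant generation distance $d(X^{\bullet},B)$ is not uniform in $X^{\bullet}$, and one is forced into the homological dévissage above, which is where the hypothesis $\gl B<\infty$ is indispensable.
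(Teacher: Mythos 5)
Your proof is correct, and its overall skeleton is the same as the paper's: use the cleaving condition together with the Krull--Schmidt property to attach to each indecomposable $X^{\bullet}\in D^b(B)$ an indecomposable direct summand $Z^{\bullet}$ of $\mathbf{L}F^{\ast}X^{\bullet}$ such that $X^{\bullet}$ is a summand of $F_{\ast}Z^{\bullet}$, prove two-sided bounds $\frac{1}{N}\hr(X^{\bullet})\le\hr(Z^{\bullet})\le f(\hr(X^{\bullet}))$, and feed them into the counting scheme of Proposition~\ref{prop-der-sdu} (the paper's $Y_{ij}^{\bullet}$ is your $Z_{ij}^{\bullet}$). Where you genuinely differ is in how the two bounds are established. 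For the lower bound the paper computes $\dim H^m(X^{\bullet})\le n_0(B)\dim H^m(Y^{\bullet})$ by adjunction against the projectives, using $\mathbf{L}F^{\ast}(Q_b)=P_{F(b)}$; you get the same constant more directly from exactness of $F_{\ast}$ and $\dim F_{\ast}(M)\le n_0(B)\dim M$, which is simpler. For the crucial upper bound the paper evaluates $H^m(\mathbf{L}F^{\ast}X^{\bullet})(a)\cong\Hom_{D^b(B)}(X^{\bullet},F_{\ast}(I_a)[m])$ and runs the hypercohomology spectral sequence $\Ext_B^p(H^q(X^{\bullet}),F_{\ast}(I_a))\Rightarrow H^{p+q}(\RHom_B(X^{\bullet},F_{\ast}(I_a)))$, with $\gl B<\infty$ entering through the finite injective resolutions of the modules $F_{\ast}(I_a)$; you instead bound the width via the minimal perfect complex of $X^{\bullet}$ (using $D^b(B)\simeq K^b(\proj B)$ and $\pd\le\gl B$ for the acyclic tail) and bound the total cohomology dimension by subadditive d\'evissage along truncation triangles, reducing to $\Phi(S)=\sum_p\dim L_pF^{\ast}(S)<\infty$ on simple modules. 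Both estimates are quadratic in $\hr(X^{\bullet})$ and invoke $\gl B<\infty$ at the same structural point (finiteness of resolutions), so nothing is lost; your route avoids injectives and spectral sequences entirely, at the cost of a little more bookkeeping, and it has the additional merit of spelling out two steps the paper dismisses with ``clearly'': that pairwise distinct $X_{ij}^{\bullet}$ force infinitely many shift/isomorphism classes among the $Z_{ij}^{\bullet}$, and the final pigeonhole extraction of the values $\rho_i$ from the window $[r_i/N,\,f(r_i)]$.
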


\begin{proof} (1) Assume that there exist an increasing sequence
$\{r_i \; | \; i \in \mathbb{N}\} \subseteq \mathbb{N}$ and
indecomposable objects $\{X_{ij}^{\bullet} \in D^b(B) \; | \; i, j
\in \mathbb{N}\}$ which are pairwise different up to shift and
isomorphism such that $\hr(X_{ij}^{\bullet}) = r_i$ for all $j \in
\mathbb{N}$. Since $F$ is a cleaving functor, $X_{ij}^{\bullet}$ is
a direct summand of $(F_{\ast} \circ \mathbf{L}F^{\ast})
(X_{ij}^{\bullet})$. Thus for any $X_{ij}^{\bullet}$, we can choose
an indecomposable direct summand $Y_{ij}^{\bullet}$ of
$\mathbf{L}F^{\ast}(X_{ij}^{\bullet})$, such that $X_{ij}^{\bullet}$
is a direct summand of $F_{\ast}(Y_{ij}^{\bullet})$. Clearly, for
any $i \in \mathbb{N}$, the set $\{Y_{ij}^{\bullet} \; | \; j \in
\mathbb{N}\}$ contains infinitely many elements which are pairwise
different up to shift and isomorphism. To prove $A$ is strongly
derived unbounded, by the proof of Proposition \ref{prop-der-sdu},
it is enough to show that there exist $N', N \in \mathbb{N}$ such
that for any $X_{ij}^{\bullet}$, the inequalities $\frac{1}{N'}
\cdot \hr(X_{ij}^{\bullet}) \leq \hr(Y_{ij}^{\bullet}) \leq N \cdot
\hr(X_{ij}^{\bullet})$ hold.

For any $a \in A$, we have
$$\begin{array}{rcl} H^m(\mathbf{L}F^{\ast}(X_{ij}^{\bullet}))(a) &
\cong & \Hom_{D^b(A)}(\mathbf{L}F^{\ast}(X_{ij}^{\bullet}), I_a[m])
\\ & \cong & \Hom_{D^b(B)}(X_{ij}^{\bullet}, F_{\ast}(I_a)[m]) \\ &
\cong & H^m(\RHom_B(X_{ij}^{\bullet}, F_{\ast}(I_a))).
\end{array}$$
Since $\mbox{\rm gl.dim} B < \infty$, the $B$-module $F_{\ast}(I_a)$
admits a minimal injective resolution
$$0 \rightarrow F_{\ast}(I_a) \rightarrow E_a^0 \rightarrow E_a^1
\rightarrow \cdots \rightarrow E_a^{r_a} \rightarrow 0,$$ and there is a
bounded converging spectral sequence
$$\Ext_B^p(H^{-q}(X_{ij}^{\bullet}), F_{\ast}(I_a))
\Rightarrow H^{p+q}(\RHom_B(X_{ij}^{\bullet}, F_{\ast}(I_a))).$$
Thus $\hw(Y_{ij}^{\bullet}) \leq
\hw(\mathbf{L}F^{\ast}(X_{ij}^{\bullet})) \leq \hw(X_{ij}^{\bullet})
+ \mbox{\rm gl.dim}B$, and
$$\begin{array}{rl} \dim H^m(Y_{ij}^{\bullet}) & = \sum\limits_{a \in A} \dim
H^m(Y_{ij}^{\bullet})(a) \\
& \leq \sum\limits_{a \in A} \dim H^m(\mathbf{L}F^{\ast}(X_{ij}^{\bullet}))(a) \\
& = \sum\limits_{a \in A} \dim H^m(\RHom_B(X_{ij}^{\bullet}, F_{\ast}(I_a))) \\
& \leq \sum\limits_{a \in A} \sum\limits_{p+q=m} \dim \Ext_B^p(H^{-q}(X_{ij}^{\bullet}), F_{\ast}(I_a)) \\
& \leq \sum\limits_{a \in A}
\sum\limits_{p=0}^{r_a} \dim H^{p-m}(X_{ij}^{\bullet}) \cdot \dim E_a^p \\
& \leq \sum\limits_{a \in A} \hl(X_{ij}^{\bullet}) \cdot (r_a + 1)
\cdot
\max\limits_{0 \leq p \leq r_a} \{\dim E_a^p\} \\
& \leq n_0(A) \cdot \hl(X_{ij}^{\bullet}) \cdot (\gl B + 1) \cdot
\max\limits_{a \in A, \; 0 \leq p \leq r_a}\{\dim E_a^p\},
\end{array}$$
where $n_0(A)$ denotes the number of objects in $A$.

Set $N_0 = n_0(A) \cdot (\gl B + 1) \cdot \max\limits_{a \in A, \; 1
\leq p \leq r_a} \{\dim E_a^p\}$. Then $\hl(Y_{ij}^{\bullet}) \leq
N_0 \cdot \hl(X_{ij}^{\bullet})$ and
$$\begin{array}{rcl} \hr(Y_{ij}^{\bullet}) & = & \hw(Y_{ij}^{\bullet}) \cdot
\hl(Y_{ij}^{\bullet}) \\ & \leq & (\hw(X_{ij}^{\bullet}) + \mbox{\rm
gl.dim}B) \cdot N_0 \cdot \hl(X_{ij}^{\bullet}) \\
& \leq & N_0 \cdot (\mbox{\rm gl.dim }B+1) \cdot
\hr(X_{ij}^{\bullet}). \end{array}$$ So $N := N_0 \cdot (\mbox{\rm
gl.dim }B+1)$ is as required.

Assume the indecomposable projective $B$-module $Q_b=B(b, -)$ and
indecomposable projective $A$-module $P_a = A(a, -)$ for all $b \in
B$ and $a \in A$. Then
$$\begin{array}{rcl} \dim H^{m}(X_{ij}^{\bullet}) & \leq & \dim
H^m(F_{\ast}(Y_{ij}^{\bullet})) \\
& = & \sum\limits_{b \in B}\dim \Hom_{D^b(B)}(Q_b, F_{\ast}(Y_{ij}^{\bullet})[m])\\
& = & \sum\limits_{b \in B}\dim \Hom_{D^b(A)}(\mathbf{L}F^{\ast}(Q_b), Y_{ij}^{\bullet}[m])\\
& = & \sum\limits_{b \in B}\dim \Hom_{D^b(A)}(F^{\ast}(Q_b), Y_{ij}^{\bullet}[m])\\
& = & \sum\limits_{b \in B}\dim \Hom_{D^b(A)}(P_{F(b)}, Y_{ij}^{\bullet}[m])\\
& \leq & n_0(B) \cdot \dim \Hom_{D^b(A)}(A, Y_{ij}^{\bullet}[m])\\
& = & n_0(B) \cdot \dim H^m(Y_{ij}^{\bullet})
\end{array}$$ for all $m \in \mathbb{Z}$, where $n_0(B)$ denotes the number of objects in
$B$. Thus $\hl(X_{ij}^{\bullet}) \leq n_0(B) \cdot
\hl(Y_{ij}^{\bullet})$, $\hw(X_{ij}^{\bullet}) \leq
\hw(Y_{ij}^{\bullet})$, and $\hr(Y_{ij}^{\bullet}) \geq
\frac{1}{n_0(B)} \cdot \hr(X_{ij}^{\bullet})$. So $N' := n_0(B)$ is
as required.

(2) It can be read off from the proof of (1) that for any indecomposable object $X^{\bullet}\in D^b(B)$,
there exists an indecomposable object $Y^{\bullet}\in D^b(A)$ such that
$\dim H^{m}(X^{\bullet}) \leq n_0(B) \cdot \dim H^m(Y^{\bullet})$ for all $m\in \mathbb{Z}$. Then
the statement follows.
\end{proof}

\subsection{\normalsize Simply connected algebras}

To a tilting $A$-module $T_A$, one can associate a torsion pair
$(\mathcal{T}(T), \mathcal{F}(T))$ in $\mod A$, and a torsion pair
$(\mathcal{X}(T), \mathcal{Y}(T))$ in $\mod \End_A(T)$. The
Brenner-Butler theorem in classical tilting theory establishes the
equivalence between $\mathcal{F}(T)$ and $\mathcal{X}(T)$ under the
restriction of functor $F=\Ext^1_A(T_A, -)$, and the equivalence
between $\mathcal{T}(T)$ and $\mathcal{Y}(T)$ under the restriction
of functor $G=\Hom_A(T_A, -)$ (Ref. \cite[Theorem (2.1)]{HR82}). We
say a torsion pair $(\mathcal{T}, \mathcal{F})$ in $\mod A$ {\it
splits} if any indecomposable $M$ in $\mod A$ is either in
$\mathcal{T}$ or in $\mathcal{F}$. A tilting $A$-module $T$ is said
to be {\it separating} if the torsion pair $(\mathcal{T}(T),
\mathcal{F}(T))$ splits. A tilting $A$-module $T$ is said to be {\it
splitting} if the torsion pair $(\mathcal{X}(T), \mathcal{Y}(T))$
splits. Refer to \cite[Chapter VI]{ASS06}.

Recall that an algebra $A$ is said to be {\it triangular} if its
quiver $Q_A$ has no oriented cycles. A triangular algebra $A$ is
said to be {\it simply connected} if for any presentation $A \cong
kQ/I$, the fundamental group $\Pi_1(Q, I)$ of $(Q, I)$ is trivial
\cite{MP83}. Now we prove Theorem II for simply connected algebras.

\begin{lemma}
\label{lemma-dbt2-sc} A simply connected algebra $A$ is either
derived discrete or strongly derived unbounded. Moreover, a simply
connected algebra $A$ is derived discrete if and only if it is
piecewise hereditary of Dynkin type, if and only if ${\rm gl.hl}A <
\infty$.
\end{lemma}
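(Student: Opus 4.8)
The plan is to collapse the whole statement onto the classical Brauer--Thrall dichotomy for modules, exploiting that a simply connected algebra is triangular, hence of finite global dimension, so that $D^b(A)\simeq K^b(\proj A)$ and $\mod A$ embeds into $D^b(A)$ as stalk complexes. I would prove that for such $A$ the conditions ``derived discrete'', ``piecewise hereditary of Dynkin type'', ``$\mbox{\rm gl.hl}A<\infty$'' and ``representation-finite'' are all equivalent, and that their common negation coincides with ``strongly derived unbounded''. The dichotomy and the ``not both'' clause then follow immediately, the incompatibility of derived discreteness and strong derived unboundedness being already recorded in the text.

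First I would dispose of the easy direction. If $A$ is piecewise hereditary of Dynkin type, then by Theorem~\ref{theorem-dbt1} it is derived finite, so $D^b(A)$ has only finitely many indecomposables up to shift and isomorphism. Since $\hr$ is invariant under shift and isomorphism, only finitely many ranges occur and each is finite, whence $\mbox{\rm gl.hr}A<\infty$ and, because $\hw\ge 1$ forces $\hl\le\hr$, also $\mbox{\rm gl.hl}A<\infty$; moreover only finitely many indecomposables of each range exist, so $A$ is derived discrete.

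For the reverse implications I would split on representation type. If $A$ is representation-infinite, then by Bautista's theorem on Brauer--Thrall conjecture~II there are infinitely many $d\in\mathbb{N}$ admitting infinitely many isomorphism classes of indecomposable modules of dimension $d$. Viewed as stalk complexes these have $\hw=1$ and $\hr=\hl=d$, so for infinitely many ranges $d$ there are infinitely many indecomposable complexes of range $d$, pairwise distinct up to shift and isomorphism; hence $A$ is strongly derived unbounded and $\mbox{\rm gl.hl}A=\infty$. Such an $A$ is therefore neither derived discrete nor, by Theorem~\ref{theorem-dbt1} (since derived finiteness would force representation-finiteness), piecewise hereditary of Dynkin type.

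The crux is the representation-finite case, where I would invoke the structural classification that a representation-finite simply connected algebra is derived equivalent to a hereditary algebra of Dynkin type, that is, piecewise hereditary of Dynkin type; it then falls under the easy direction and is derived discrete with finite global cohomological length. I expect this structural input to be the main obstacle, and the only place where simple connectedness is used essentially: it excludes the intermediate derived-discrete-but-not-derived-finite algebras, namely Vossieck's gentle one-cycle algebras, each of which contains a cycle and so is not simply connected. Assembling the representation-finite and representation-infinite cases yields the full chain of equivalences together with the dichotomy.
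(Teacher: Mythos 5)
Your easy direction and your treatment of the representation-infinite case (stalk complexes plus Brauer--Thrall~II) are fine and match the paper. But the crux of your argument --- the ``structural classification that a representation-finite simply connected algebra is derived equivalent to a hereditary algebra of Dynkin type'' --- is not a theorem; it is false, and the paper itself contains the counterexample. The algebra $A_{3m}^m$ with $m\geq 3$ (linear quiver $A_{3m}$, all paths of length $m$ equal to zero) is a Nakayama algebra, hence representation-finite, and its quiver is a tree, hence it is simply connected; yet by Lemma~\ref{lemma-sdu-a} it is strongly derived unbounded with ${\rm gl.hl}=\infty$, so it is neither derived discrete nor piecewise hereditary of Dynkin type. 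Consequently your proposed chain of equivalences (derived discrete $\Leftrightarrow$ piecewise hereditary Dynkin $\Leftrightarrow$ ${\rm gl.hl}<\infty$ $\Leftrightarrow$ representation-finite) cannot hold: the derived dichotomy of the lemma does \emph{not} collapse onto the module-level Brauer--Thrall dichotomy, and representation-finiteness must be dropped from the list. You also misdiagnose the role of simple connectedness: it is not merely to exclude Vossieck's one-cycle algebras, but to make an inductive tilting argument available.

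What the paper actually does in the representation-finite case is the genuinely hard step you cannot skip: if $A$ is representation-finite and not hereditary, Assem's theorem provides a separating but non-splitting basic tilting module $T$, and $A_1=\End_A(T)$ is again simply connected, has the same number of simples, and has strictly more indecomposable modules. Iterating, and using that there are only finitely many basic representation-finite triangular algebras with a fixed number of simples, the process must terminate at an algebra that is either hereditary \emph{or representation-infinite}. In the first case $A$ is piecewise hereditary (Dynkin or not, decided by Theorem~\ref{theorem-dbt1} and Corollary~\ref{coro-equi-h}); in the second case $A$ is derived equivalent to a representation-infinite algebra and hence, by Proposition~\ref{prop-der-sdu}, strongly derived unbounded even though $A$ itself is representation-finite --- exactly the possibility your proposal rules out by fiat. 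Any correct proof must account for this branch.
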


\begin{proof} According to Corollary \ref{coro-equi-h} and Proposition
\ref{prop-der-sdu}, it is enough to show that a simply connected
algebra $A$ is tilting equivalent (thus derived equivalent) to
either a hereditary algebra or a representation-infinite algebra. If
$A$ is itself hereditary or representation-infinite then we have
nothing to do. If $A$ is representation-finite but not hereditary
then, by \cite[Theorem]{Ass87}, there exists a separating but not
splitting basic tilting $A$-module $T$. Put $A_1 = \End_A(T)$. Then
there are more indecomposable modules in $\mod A_1$ than in $\mod
A$, in particular $A$ and $A_1$ are not isomorphic as algebras.
Moreover, $A_1$ is still simply connected by \cite[Corollary]{AS94}
and thus triangular. Since $A_1$ is a tilted algebra of $A$, they
have the same number of simple modules \cite[Corollary (3.1)]{HR82}.
If $A_1$ is hereditary or representation-infinite then we have
nothing to do. If $A_1$ is representation-finite but not hereditary
then there exists a separating but not splitting basic tilting
$A_1$-module, and we can proceed as above repetitively. We claim
this process must stop in finite steps, and thus $A$ is tilting
equivalent to either a hereditary algebra or a
representation-infinite algebra. Indeed, for any $n \in \mathbb{N}$,
there are only finitely many (unnecessarily connected) basic
representation-finite triangular algebras having $n$ simple modules
up to isomorphism (compare with \cite[Chapter IV, Lemma
7.3]{Hap88}). We can prove this by induction on $n$. If $n=1$, then
there exists only one basic triangular algebra up to isomorphism.
Assume that it is true for $n-1$ and $B$ is a basic
representation-finite triangular algebra having $n$ simple modules.
Then $B$ is a one-point extension of a basic representation-finite
triangular algebra with $n-1$ simples, say $C$, by some $C$-module
$M = \oplus_{i=1}^r M_i$ with $M_i$ being indecomposable. Since $C$
is representation-finite, we have $r \leq 3$. Indeed, if $r \geq 4$
then $\dim \; e(\rad B / \rad^2 B)(1-e) = \dim \; M/\rad M \geq 4$,
where $e$ is the idempotent of $B$ corresponding to the extension
vertex. Thus in the quiver of $B$ there will be at least four arrows
starting from the extension vertex, which implies that $B$ is
representation-infinite. It is a contradiction. Therefore, the
number of the isomorphism classes of basic representation-finite
triangular algebras having $n$ simple modules is finite.
Furthermore, the tilting process above must stop in finite steps,
since representation-finite simply connectedness and the number of
simples are invariant under this process.
\end{proof}

\subsection{\normalsize The second Brauer-Thrall type theorem}

Bekkert and Merklen have classified the indecomposable objects in
the derived category of a gentle algebra \cite[Theorem 3]{BM03}. Now
we apply their results to prove Theorem II for gentle algebras.

\begin{lemma}
\label{lemma-dbt2-gentle} A gentle algebra $A$ is either derived
discrete or strongly derived unbounded. Moreover, $A$ is derived
discrete if and only if ${\rm gl.hl} A < \infty$.
\end{lemma}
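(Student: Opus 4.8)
The plan is to exploit the explicit classification of indecomposable objects in $D^b(A)$ for a gentle algebra $A$ due to Bekkert and Merklen \cite[Theorem 3]{BM03}, which describes them in terms of combinatorial data (``strings'' and ``bands'' relative to the gentle quiver $(Q,I)$). For gentle algebras there are two kinds of indecomposable complexes: the \emph{string complexes}, built from walks in the quiver that need not close up, and the \emph{band complexes}, built from cyclic walks together with an indecomposable $k[t,t^{-1}]$-module (equivalently a Jordan block, parametrized by an eigenvalue $\lambda \in k^{\ast}$ and a multiplicity $m$). From this description one reads off the cohomology directly, and in particular one can compute $\hl$, $\hw$, and hence $\hr = \hw \cdot \hl$ purely combinatorially.

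First I would set up the dichotomy. A gentle algebra is derived discrete precisely when its bounded derived category has no band complexes; this is the content of Vossieck's classification \cite{Vo01} combined with \cite{BM03}. So the first step is to split into two cases according to whether bands exist. If there are no bands, then for each fixed cohomology dimension vector there are only finitely many string complexes realizing it (a string of bounded homology is a walk of bounded combinatorial length, and there are finitely many such walks up to the relevant equivalence), which shows $A$ is derived discrete; moreover one must then verify $\mathrm{gl.hl}\, A < \infty$, i.e.\ that the cohomology spaces of string complexes have a uniform dimension bound. This uniform bound should follow because in the absence of bands every indecomposable string is, up to shift, one of finitely many ``shapes'' whose homology dimensions are individually bounded — this is essentially Vossieck's finiteness packaged through the string combinatorics.

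Second, if there \emph{is} a band, I would manufacture the strongly derived unbounded behavior. Fix one band $b$. Varying the multiplicity $m \in \mathbb{N}$ of the underlying $k[t,t^{-1}]$-module (with a fixed eigenvalue $\lambda$) gives an infinite family of pairwise non-isomorphic band complexes $B_{b,\lambda,m}^{\bullet}$ whose cohomological width $\hw$ is governed only by the fixed combinatorial length of the band $b$ and is therefore constant, while $\hl$ grows linearly in $m$; hence $\hr = \hw \cdot \hl$ takes infinitely many distinct values, and for each realized value there are already infinitely many complexes (varying $\lambda \in k^{\ast}$, which is infinite since $k$ is algebraically closed, keeps $m$ and hence $\hr$ fixed while producing infinitely many non-isomorphic, non-shift-equivalent indecomposables). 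This simultaneously produces an infinite increasing sequence of ranges $\{r_i\}$ and, for each, infinitely many indecomposables of that range — exactly strong derived unboundedness. In this case one also has $\mathrm{gl.hl}\, A = \infty$ because $\hl(B_{b,\lambda,m}^{\bullet}) \to \infty$ as $m \to \infty$, giving the contrapositive half of the ``iff.''

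The main obstacle I anticipate is bookkeeping rather than conceptual: extracting from the Bekkert--Merklen normal form the precise dependence of $\dim H^i$ on the band multiplicity $m$ and on the shift, and checking that the width stays bounded while the length scales with $m$ and that distinct $(\lambda,m)$ (modulo the cyclic-rotation and shift identifications inherent in the band datum) give genuinely non-shift-equivalent indecomposables. One must be careful that the parameter $\lambda$ varies over an infinite set without altering $\hr$, and that rotating the band does not secretly collapse the count. Once the homology of string and band complexes is read off correctly, the dichotomy and both ``iff'' statements fall out, and the fact that an algebra cannot be simultaneously derived discrete and strongly derived unbounded (noted already in the text) rules out overlap.
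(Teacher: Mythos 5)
Your dichotomy and your treatment of the band case follow the paper: the paper also cites Bekkert--Merklen to get ``derived discrete $\Leftrightarrow$ no generalized bands'' and, when a generalized band $w$ exists, constructs the family $\{P^{\bullet}_{w,f} \mid f=(x-\lambda)^d\}$ whose cohomological range and length depend only on $\deg f$, giving strong derived unboundedness and ${\rm gl.hl}\,A=\infty$ exactly as you outline.

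The genuine gap is in your band-free case, specifically in the claim that ``in the absence of bands every indecomposable string is, up to shift, one of finitely many shapes whose homology dimensions are individually bounded.'' This is false: derived discrete gentle algebras need not be derived finite. The normal forms $\Lambda(r,n,m)$ of Bobi\'nski--Geiss--Skowro\'nski have, up to shift and isomorphism, \emph{infinitely} many indecomposable complexes --- the generalized strings listed in the paper's proof contain a winding parameter $p\geq 0$ that can be arbitrarily large, so the string complexes have unbounded cohomological width $\hw$ and unbounded total size. What remains bounded is only $\hl$, and the reason is not finiteness of shapes but a multiplicity statement: in each single homological degree, each indecomposable projective occurs at most once (multiplicity-freeness of the components of $P^{\bullet}_w$), so $\dim H^i \leq \dim \Lambda(r,n,m)$ for every $i$. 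Moreover, the paper does not even attempt to prove this directly for an arbitrary band-free gentle algebra $A$; it first invokes the classification of \cite{BGS04} to replace $A$, up to derived equivalence, by $\Lambda(r,n,m)$, uses Corollary~\ref{coro-equi-h} to transport finiteness of ${\rm gl.hl}$ across the derived equivalence, and only then runs the explicit string analysis, which is feasible precisely because the generalized strings of $\Lambda(r,n,m)$ can be listed. Your proposal, as written, replaces this reduction-plus-multiplicity argument with a finiteness assertion that fails, and supplying the uniform bound on $\hl$ for a general band-free gentle algebra without the reduction is exactly the nontrivial point left open.
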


\begin{proof} It follows from \cite[Theorem 4]{BM03} that a gentle
algebra $A$ is derived discrete if and only if $A$ does not contain
generalized bands.

If $A$ contains a generalized band $w$, then one can construct
indecomposable complexes $\{P^{\bullet}_{w,f} \; | \;
f=(x-\lambda)^d \in k[x], \; \lambda \in k^*, \; d>0 \}$ which are
pairwise different up to shift and isomorphism such that
$P^{\bullet}_{w,f}$ and $P^{\bullet}_{w,f'}$ are of the same
cohomological range (resp. cohomological length) if and only if
$\deg(f) = \deg(f')$ (Ref. \cite[Definition 3]{BM03}). Thus $A$ is
strongly derived unbounded and ${\rm gl.hl} A = \infty$.

If $A=kQ/I$ does not contain generalized bands we shall prove ${\rm
gl.hl} A <\infty$. By Bobi{\'n}ski, Geiss and Skowro{\'n}ski's
classification of derived discrete algebras \cite[Theorem A]{BGS04},
we know that $A$ is derived equivalent to a gentle algebra
$\Lambda(r,n,m)$ with $n \geq r \geq 1$ and $m \geq 0$, which is
given by the quiver {\tiny $$\xymatrix{ &&&& 1 \ar[r]^-{\alpha_{1}}
& \cdots \ar[r]^-{\alpha_{n-r-2}} &  n-r-1 \ar[dr]^-{\alpha_{n-r-1}} & \\
(-m) \ar[r]^-{\alpha_{-m}} & \cdots \ar[r]^-{\alpha_{-2}} & (-1)
\ar[r]^-{\alpha_{-1}} & 0 \ar[ur]^-{\alpha_0} &&&& n-r \ar[dl]^-{\alpha_{n-r}} \\
&&&& n-1 \ar[ul]^-{\alpha_{n-1}} & \cdots \ar[l]^-{\alpha_{n-2}} &
n-r+1 \ar[l]^-{\alpha_{n-r+1}} & }$$}

\noindent with the relations $\alpha_{n-1}\alpha_0,
\alpha_{n-2}\alpha_{n-1}, \cdots , \alpha_{n-r}\alpha_{n-r+1}$.
According to Corollary \ref{coro-equi-h}, it suffices to show that
${\rm gl.hl}\Lambda(r,n,m) \leq \dim \Lambda(r,n,m)<\infty$. Note
that any generalized string $w$ of $\Lambda(r,n,m)$ must be a
sub-generalized string of the following generalized strings or their
inverses:
$$(\alpha_i\cdots\alpha_{n-r})[(\alpha_{n-r+1}) \cdots
(\alpha_{n-1})(\alpha_0\cdots \alpha_{n-r})]^p \, (\alpha_{n-r+1})
\cdots (\alpha_{n-1})(\alpha_j\cdots \alpha_{-1})^{-1},$$ with
$-m\leq i\leq n-r$, $-m\leq j\leq -1$ and $p\geq0$. By Bekkert and
Merklen's construction of the indecomposable objects in the bounded
derived category of a gentle algebra \cite[Definition 2 and Theorem
3]{BM03}, every indecomposable projective direct summand of each
component of the indecomposable object $P^\bullet_w\in
K^b(\mbox{proj} \Lambda(r,n,m))$ is multiplicity-free, and hence
${\rm gl.hl}\Lambda(r,n,m) \leq \dim \Lambda(r,n,m)<\infty$.
\end{proof}

Let $A_n^m$ be the finite spectroid defined by the quiver
$$\xymatrix{
n \ar [r]^{\alpha_{n-1}} & n-1\ar [r]^{\alpha_{n-2}}& \cdots \ar
[r]^{\alpha_2}& 2 \ar [r]^{\alpha_1} &1},$$ and the admissible ideal
generated by all paths of length $m$.

\begin{lemma}
\label{lemma-sdu-a} The finite spectroid $A_{3m}^m$ with $m \geq 3$
is strongly derived unbounded and ${\rm gl.hl} A_{3m}^m = \infty$.
\end{lemma}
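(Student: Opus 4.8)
The plan is to reduce the statement to the already-established dichotomy for simply connected algebras. First I would observe that the quiver of $A_{3m}^m$ is the linear quiver $A_{3m}$, which is a tree and in particular has no oriented cycles; hence $A_{3m}^m$ is triangular, and since its quiver is a tree the fundamental group $\Pi_1(Q,I)$ is trivial for any admissible presentation. Thus $A_{3m}^m$ is simply connected, and Lemma~\ref{lemma-dbt2-sc} applies: $A_{3m}^m$ is either derived discrete or strongly derived unbounded, and it is derived discrete if and only if ${\rm gl.hl}\,A_{3m}^m<\infty$ if and only if it is piecewise hereditary of Dynkin type. Consequently it suffices to prove that $A_{3m}^m$ is \emph{not} piecewise hereditary of Dynkin type; the two required conclusions, ``strongly derived unbounded'' and ``${\rm gl.hl}=\infty$'', then follow at once from the dichotomy.

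For the core step I would exhibit a one-parameter family of indecomposable perfect complexes of unbounded cohomological length. Since $A_{3m}^m$ is triangular it has finite global dimension, so $D^b(A_{3m}^m)\simeq K^b(\proj A_{3m}^m)$ and I may work entirely with complexes of projectives. Writing $P_i$ for the indecomposable projective at vertex $i$, the nonzero maps are exactly the ``multiply by a path'' maps $P_j\to P_i$ with $0\le i-j\le m-1$, and the three length-$m$ segments among the $3m$ vertices provide just enough room to assemble a cyclic zigzag of such maps that closes up in $K^b(\proj A_{3m}^m)$. In the spirit of the band complexes of \cite{BM03}, I would decorate the closing map of this zigzag with a Jordan block $J_d(\lambda)$, $\lambda\in k^*$, $d\ge 1$, to produce complexes $P^\bullet_{w,\lambda,d}$, prove they are indecomposable by tracing a local endomorphism ring back to the indecomposability of $J_d(\lambda)$, and check that $\hl(P^\bullet_{w,\lambda,d})$ grows linearly in $d$. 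Varying $d$ gives ${\rm gl.hl}=\infty$ directly, and for each fixed $d$ varying $\lambda$ produces infinitely many pairwise shift-nonisomorphic indecomposables of a common cohomological range, i.e.\ strong derived unboundedness in the literal sense of the definition.

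A cleaner but more computational alternative for the ``not piecewise hereditary of Dynkin type'' step is a numerical obstruction via the Euler form, which is a derived invariant and is positive definite for Dynkin hereditary algebras. The Cartan matrix here is $C=I+L+\cdots+L^{m-1}$ with $L$ the nilpotent lower shift, so $C^{-1}=(I-L)(I+L^{m}+L^{2m})$ is explicit, and one can compute the associated Euler/Coxeter form and show that for every $m\ge 3$ it is not of Dynkin type (its symmetrization is not positive definite, equivalently the Coxeter transformation is not periodic). The main obstacle in either route is the hands-on verification: one cannot simply quote the gentle-algebra machinery of \cite{BM03}, since $A_{3m}^m$ has relations of length $m\ge 3$ and is not gentle, so the band complexes together with their indecomposability and cohomology growth must be constructed and checked directly (or, along the other route, the general-$m$ spectral analysis of the Coxeter transformation must be carried out). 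I expect the explicit construction of the closing cyclic zigzag from the three segments, and the proof that the resulting family is indecomposable, to be the heart of the argument.
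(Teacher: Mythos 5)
Your proposal is correct, and the construction at its core is exactly the paper's own proof: the paper builds precisely the family you sketch, namely complexes $P^{\bullet}_{\lambda,d}$ supported on the two-branch cyclic zigzag with components $P_1^d$, $P_m^d\oplus P_2^d$, $P_{m+1}^d\oplus P_{m+1}^d$, $P_{2m}^d\oplus P_{2m}^d$, $P_{2m+1}^d\oplus P_{3m-1}^d$, $P_{3m}^d$, all differentials being path-multiplication maps tensored with identity matrices except one entry carrying the Jordan block $\mathbf{J}_{\lambda,d}$; indecomposability is checked via locality of the endomorphism ring, and $\hl(P^{\bullet}_{\lambda,d})=d\cdot\hl(P^{\bullet}_{\lambda,1})$, the relations of length exactly $m$ being what makes the zigzag a complex. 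Where you genuinely differ is the surrounding logic. The paper reads both conclusions directly off the family (fixed $d$, varying $\lambda$: infinitely many indecomposables of one cohomological range; varying $d$: unbounded $\hl$), never invoking Lemma~\ref{lemma-dbt2-sc}. You instead observe that $A_{3m}^m$ is simply connected (its quiver is a tree, so $\Pi_1(Q,I)$ is trivial for any presentation) and reduce via Lemma~\ref{lemma-dbt2-sc} to showing that $A_{3m}^m$ is not piecewise hereditary of Dynkin type; this is legitimate, since Lemma~\ref{lemma-dbt2-sc} is proved earlier and independently of the present lemma, so there is no circularity. This reduction buys a real economy which your write-up does not fully exploit: the $d=1$ subfamily alone (no Jordan blocks of higher size, no bookkeeping of ranges) already gives infinitely many non-isomorphic indecomposables with a single cohomology dimension vector, contradicting derived discreteness, and your Euler-form alternative removes the construction altogether. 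That alternative does in fact work: $A_{3m}^m$ is triangular, hence of finite global dimension, the Euler form on $K_0$ is invariant up to $\mathbb{Z}$-congruence under derived equivalence and is positive definite for Dynkin hereditary algebras, and the vector $a = e_1 - e_2 - e_m + 2e_{m+1} - 2e_{2m} + e_{2m+1} + e_{3m-1} - e_{3m}$ (which is the class, in the projective basis, of the paper's band complex) satisfies $a^{T}Ca=0$ for every $m\geq 3$, where $C=I+L+\cdots+L^{m-1}$ is the Cartan matrix; so the form is not positive definite. (One caveat: non-positive-definiteness and non-periodicity of the Coxeter transformation are not equivalent in general, contrary to your parenthetical remark, but each is separately a sufficient obstruction, so the argument is unaffected.) The trade-off between the two routes: the paper's direct argument keeps the lemma self-contained and quantitative, whereas yours imports the tilting-theoretic content behind Lemma~\ref{lemma-dbt2-sc} but can replace the hands-on indecomposability analysis by a short $K_0$-computation.
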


\begin{proof} Let $B = A_{3m}^m$, $w_1=\alpha_{3m-1}$, $w_{2}=\alpha_{3m-2}\cdots\alpha_{2m}$,
$w_{3}=\alpha_{2m-1}\cdots\alpha_{m+1}$,
$w_{4}=\alpha_{m}\cdots\alpha_{2}$, $w_5=\alpha_{1}$,
$w'_1=\alpha_{3m-1}\cdots\alpha_{2m+1}$, $w'_2=\alpha_{2m}$,
$w'_3=w_3$, $w'_{4}=\alpha_{m}$, and
$w'_5=\alpha_{m-1}\cdots\alpha_{1}$. Then we construct a family of
complexes $\{P^{\bullet}_{\lambda, d} \; | \; \lambda \in k, d \geq
1\}$ by
$$\begin{array}{ll} P^{\bullet}_{\lambda, d} :=0 & \!\!\! \rightarrow P_1^d
\stackrel{\delta^0}{\rightarrow} P_m^d \oplus P_2^d
\stackrel{\delta^1}{\rightarrow} P_{m+1}^d \oplus P_{m+1}^d
\stackrel{\delta^2}{\rightarrow} P_{2m}^d \oplus P_{2m}^d \\
& \!\!\! \stackrel{\delta^3}{\rightarrow} P_{2m+1}^d \oplus
P_{3m-1}^d \stackrel{\delta^4}{\rightarrow} P_{3m}^d \rightarrow 0
\end{array}$$ with the differential maps
$$\delta^0 := \left(\begin{array}{cc} P(w'_5) \mathbf{I}_d \\
P(w_5) \mathbf{J}_{\lambda,d} \end{array}\right), \;\;
\delta^i := \left(\begin{array}{cc} P(w'_{5-i}) \mathbf{I}_d & 0 \\
0 & P(w_{5-i}) \mathbf{I}_d \end{array}\right), \;\; \mbox{ for
}i=1, 2, 3,$$ and $\delta^4 := (P(w'_1) \mathbf{I}_d, P(w_1)
\mathbf{I}_d)$. Here $\mathbf{J}_{\lambda, d}$ denotes the upper
triangular $d \times d$ Jordan block with eigenvalue $\lambda \in
k$, and the map $P(u)$ from $P_{t(u)}$ to $P_{o(u)}$ is the left
multiplication by the path $u$ with origin $o(u)$ and terminus
$t(u)$. In fact, the complex $P^{\bullet}_{\lambda, d}$ can be
illustrated as follows
$$\xymatrix@!=1pc{
P_1^d \ar [rrd]_-{P(w_5)\mathbf{J}_{\lambda,d}} \ar [rr]^-{P(w'_5)
\mathbf{I}_d} && P_{m}^d \ar [rr]^-{P(w'_4) \mathbf{I}_d} &&
P_{m+1}^d \ar [rr]^-{P(w'_3) \mathbf{I}_d} && P_{2m}^d \ar
[rr]^-{P(w'_2) \mathbf{I}_d} && P_{2m+1}^d
\ar [rr]^-{P(w'_1)\mathbf{I}_d} && P_{3m}^d \\
&& P_{2}^d \ar [rr]^-{P(w_4)\mathbf{I}_d} && P_{m+1}^d \ar
[rr]_-{P(w_3) \mathbf{I}_d} && P_{2m}^d \ar [rr]_-{P(w_2)
\mathbf{I}_d} && P_{3m-1}^d \ar [rru]_-{P(w_1)\mathbf{I}_d} }$$
where $P_1^d$ lies in the $0$-th component of $P^{\bullet}_{\lambda,
d}$.

It is elementary to show that $\End_{C^b(B)}(P^{\bullet}_{\lambda,
d})$ is local, i.e., the complex $P^{\bullet}_{\lambda, d}$ is
indecomposable, for all $\lambda \in k$ and $d \geq 1$. Indeed, if
$f^{\bullet}=(f^i)\in \End_{C^b(B)}(P^{\bullet}_{\lambda, d})$ then
$f^i=0$ unless $0\leq i\leq 5$. As a $k$-vector space,
$$\Hom_B(P_i, P_j) \cong e_jBe_i \cong
\left\{\begin{array}{ll} k, & \mbox{if } i \leq j < i+m ;
\\ 0, & \mbox{otherwise. } \end{array}\right.$$
Thus each $f^i$ can be expressed as a matrix and by the construction
of $P^{\bullet}_{\lambda, d}$, $f^2$ and $f^3$ can be written as the
same block matrix of form
$$\left(\begin{array}{cc} f_{11} & f_{12} \\ f_{21} & f_{22} \\
\end{array}\right), \, f_{ij} \in k^{d \times d}.$$
Since $m>2$, we have
$$f^1 = \left(\begin{array}{ll} f^1_{11} & f^1_{12} \\ 0 & f^1_{22}
\end{array}\right), \;
f^4=\left(\begin{array}{ll} f^4_{11} & 0 \\ f^4_{21} & f^4_{22}
\end{array}\right), \, f^h_{ij} \in k^{d \times d}.$$ The commutativity of
cochain map forces $$f^1 = f^2 = f^3 = f^4 = \left(\begin{array}{cc}
f_{11} & 0 \\ 0 & f_{22} \end{array}\right).$$ Furthermore, $f^0,
f^5 \in k^{d \times d}$ satisfy
$$\left(\begin{array}{c} \mathbf{I}_d \\ \mathbf{J}_{\lambda, d} \end{array}\right)f^0
=
\left(\begin{array}{cc} f_{11} & 0 \\ 0 & f_{22} \end{array}\right)
\left(\begin{array}{c} \mathbf{I}_d \\ \mathbf{J}_{\lambda, d}
\end{array}\right)$$ and $$f^5 \left(\begin{array}{cc} \mathbf{I}_d &
\mathbf{I}_d \end{array}\right) = \left(\begin{array}{cc}
\mathbf{I}_d & \mathbf{I}_d
\end{array}\right) \left(\begin{array}{cc} f_{11} & 0
\\ 0 & f_{22} \end{array}\right). $$
Therefore, $f^0 = f_{11} = f_{22} = f^5$ and $\mathbf{J}_{\lambda,
d} f^0 = f^0 \mathbf{J}_{\lambda, d} $, and thus $f^0$ is of the
form
$$\left(\begin{array}{ccccc} x_1 & x_2 & \cdots & x_{d-1} & x_d
\\ 0 & x_1 & \cdots & x_{d-2} & x_{d-1}\\ \vdots & \vdots & \ddots & \vdots & \vdots
\\ 0 & 0 & \cdots & x_1 & x_2 \\ 0 & 0 & \cdots & 0 & x_1
\end{array}\right), \; x_i \in k. $$ Hence $\End_{C^b(B)}(P^{\bullet}_{\lambda, d})$ is local.
Moreover, the complexes $\{P^{\bullet}_{\lambda, d} \; | \; \lambda
\in k, d \geq 1\}$ are pairwise different up to shift and
isomorphism by a similar argument on the morphisms between these
$P^{\bullet}_{\lambda, d}$'s.

Now it suffices to show that $\hr(P^{\bullet}_{\lambda, d}) =
\hr(P^{\bullet}_{\lambda', d'})$ and $\hl(P^{\bullet}_{\lambda, d})
= \hl(P^{\bullet}_{\lambda', d'})$ if and only if $d=d'$, which
implies $B$ is strongly derived unbounded and ${\rm gl.hl} B =
\infty$. Indeed, it is clear that $H^i(P^{\bullet}_{\lambda, d})$ is
independent of $\lambda$ except $i=0, 1$. Moreover,
$H^0(P^{\bullet}_{\lambda, d}) = 0$ and $\dim
H^1(P^{\bullet}_{\lambda, d})$ is independent of $\lambda$ since
$\delta^0$ is injective. Hence, $P^{\bullet}_{\lambda, d}$'s are of
the same cohomological range and cohomological length for a fixed
$d$. Conversely, we have $\hw(P^{\bullet}_{\lambda, d}) = 5$ due to
$H^1(P^{\bullet}_{\lambda, d}) \neq 0 \neq H^5(P^{\bullet}_{\lambda,
d})$ and $\hl(P^{\bullet}_{\lambda, d}) = d \cdot
\hl(P^{\bullet}_{\lambda, 1})$. Thus, $P^{\bullet}_{\lambda, d}$'s
are of distinct cohomological ranges and cohomological lengths for
different $d$.
\end{proof}

\begin{lemma}
\label{lemma-sdu-end} If a finite spectroid $A$ is not strongly
derived unbounded (resp. $A$ is of finite global cohomological
length) then the endomorphism algebra $A(a, a)$ is isomorphic to
either $k$ or $k[x]/(x^2)$ for all $a \in A$.
\end{lemma}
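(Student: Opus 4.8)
The plan is to prove the contrapositive: assuming some endomorphism algebra $A(a,a)$ is isomorphic to neither $k$ nor $k[x]/(x^2)$, I would show that $A$ is simultaneously strongly derived unbounded and of infinite global cohomological length. The first step is a structural reduction: over the algebraically closed field $k$, the only local finite-dimensional $k$-algebras of dimension at most $2$ are $k$ and $k[x]/(x^2)$, so the hypothesis is equivalent to $\dim_k A(a,a)\ge 3$. I would then split according to the minimal number of generators of the local algebra $A(a,a)$, that is, according to $\dim_k \rad A(a,a)/\rad^2 A(a,a)$.

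In the one-generator case $\dim_k \rad A(a,a)/\rad^2 A(a,a)=1$, we have $A(a,a)\cong k[x]/(x^n)$ for its single generator $x\in\rad A(a,a)$, and $\dim_k A(a,a)\ge 3$ forces $n\ge 3$. Here I would build a $k$-linear functor $F\colon A_{3n}^n\to A$ sending every vertex to the object $a$ and every arrow to $x$; a path of length $\ell$ is then sent to $x^\ell$, so the defining relations (paths of length $n$) are respected because $x^n=0$. In the case $\dim_k \rad A(a,a)/\rad^2 A(a,a)\ge 2$, I would instead pick two elements $\xi,\eta\in\rad A(a,a)$ that are linearly independent modulo $\rad^2 A(a,a)$ and define a functor from the Kronecker algebra $K$ (two vertices joined by two parallel arrows) to $A$, sending both vertices to $a$ and the two arrows to $\xi,\eta$. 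Since $K$ has no oriented cycles and no composable pair of arrows, this $F$ is automatically a well-defined functor; moreover $\gl K=1<\infty$, and $K$ is representation-infinite, hence strongly derived unbounded by the Brauer--Thrall II remark recorded in the previous subsection and of infinite global cohomological length, because its indecomposable modules, viewed as stalk complexes with $\hl$ equal to their dimension, have unbounded dimension.

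In both cases the crux is to verify that $F$ is a cleaving functor, which I would establish via condition $(1)$: exhibiting a natural (i.e. $B$-bimodule) retraction of the embedding $B(b,b')\hookrightarrow A(Fb,Fb')$. On each hom-space I would take $\rho\colon A(a,a)\to B(b,b')$ to be the projection onto the span of the chosen images (the powers $x^{i-j}$ in the first case, the pair $\xi,\eta$ in the second) along a complement containing $k\cdot 1$, the higher radical $\rad^2 A(a,a)$, and any unused generators. The bimodule compatibility then reduces to a single observation: writing an element of $A(a,a)$ as a scalar plus a radical part, multiplying by one of the chosen image elements throws the radical part into $\rad^2 A(a,a)$, which $\rho$ annihilates, so that $\rho$ of a product equals the product of the $\rho$'s. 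This is precisely where finite-dimensionality and the radical filtration do the work, and it is the step I expect to demand the most care, especially in ruling out interference from unused loops or composite paths through other objects.

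Once $F$ is known to be cleaving, the hypothesis $\gl B<\infty$ lets me invoke Proposition~\ref{prop-cf-sdu}: part $(1)$ transports strong derived unboundedness from $B$ to $A$, where $B=A_{3n}^n$ is strongly derived unbounded by Lemma~\ref{lemma-sdu-a} in the first case and $B=K$ is strongly derived unbounded in the second; and the contrapositive of part $(2)$ transports $\mbox{\rm gl.hl}=\infty$ from $B$ to $A$, since $\mbox{\rm gl.hl}\,A_{3n}^n=\infty$ again by Lemma~\ref{lemma-sdu-a} and $\mbox{\rm gl.hl}\,K=\infty$ as noted above. Together these yield that $A$ is strongly derived unbounded and satisfies $\mbox{\rm gl.hl}\,A=\infty$, which is exactly the conclusion of the contrapositive and hence proves the lemma.
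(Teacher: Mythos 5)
Your proposal is correct in outline, and in one place it takes a genuinely different route from the paper; one detail of your cleaving verification, however, needs repair.

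The paper never performs your case split on $\dim_k \rad A(a,a)/\rad^2 A(a,a)$. It first deduces from either disjunct of the hypothesis that $A$ is representation-finite (via the Brauer--Thrall theorems applied to stalk complexes, as recorded in Sections 2.1 and 3.1), and then invokes the standard fact that the local algebras $A(a,a)$ of a representation-finite spectroid are uniserial, so that $A(a,a)\cong k$ or $k[x]/(x^m)$ automatically. Thus only your Case 1 ever occurs there, and it is handled exactly as you handle it: the cleaving functor $A_{3m}^m\to A$, Lemma~\ref{lemma-sdu-a}, and Proposition~\ref{prop-cf-sdu} yield a contradiction when $m\ge 3$; moreover the paper simply asserts, citing the cleaving-functor literature, that this functor is cleaving. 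Your Case 2 replaces the two module-theoretic imports (Brauer--Thrall II for arbitrary $A$, plus uniseriality for representation-finite spectroids) by a direct cleaving functor from the Kronecker algebra $K$. That part of your argument is sound: $K$ has no relations, so the functor exists; $\mathrm{gl.dim}\, K=1$; $K$ is representation-infinite (here even the classical classification of Kronecker modules suffices, with no appeal to Brauer--Thrall II), hence strongly derived unbounded with ${\rm gl.hl}\,K=\infty$; and your retraction is genuinely natural, since for $\alpha=c\cdot 1+r$ with $r\in\rad A(a,a)$ one has $\xi\alpha-c\xi\in\rad^2 A(a,a)$ and $\alpha\xi-c\xi\in\rad^2 A(a,a)$, which your $\rho$ annihilates. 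So your route is more self-contained on the representation-theoretic side, at the price of verifying one extra cleaving functor; the paper's is shorter because it borrows representation-finite structure theory.

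The flaw is in your Case 1 retraction. Your recipe --- projection onto the span of the chosen images ``along a complement containing $k\cdot 1$, the higher radical $\rad^2 A(a,a)$, and any unused generators'' --- is inconsistent for hom-spaces of $A_{3n}^n$ corresponding to paths of length $i-j\ge 2$: there the chosen image $x^{i-j}$ itself lies in $\rad^2 A(a,a)$, so no complement of $k\,x^{i-j}$ can contain $\rad^2 A(a,a)$, and a retraction annihilating $\rad^2 A(a,a)$ would kill $x^{i-j}$ and fail to be a retraction at all. For the same reason, your verification (``multiplying by a chosen image throws the radical part into $\rad^2$, which $\rho$ kills'') works only in the Kronecker case and for paths of length at most $1$. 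The correct retraction on $B(i,j)$ projects onto $k\,x^{i-j}$ along $\bigoplus_{t\neq i-j} k\,x^t$, and naturality follows from the grading by powers of $x$: the coefficient of $x^{(i-j)+s}$ in $x^s\alpha$ equals the coefficient of $x^{i-j}$ in $\alpha$, matching composition of paths in $A_{3n}^n$ (with both sides vanishing when lengths reach $n$). With this correction --- or by simply citing the cleaving property of $A_n^m\to A$ as the paper does --- your proof is complete.
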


\begin{proof} If $A$ is not strongly derived unbounded (resp.
$A$ is of finite global cohomological length) then $A$ is
representation-finite. Thus for any $a \in A$, $A(a,a)$ is a
uniserial local algebra, and then $A(a,a) \cong k$ or $A(a,a) \cong
k[x]/(x^m)$ with $m \geq 2$. Note that the functor $F : A_n^m
\rightarrow A$ given by $F(i)=a$ and $F(\alpha_j)=x$ is a cleaving
functor. If $m \geq 3$ then, by Lemma \ref{lemma-sdu-a}, $A_{3m}^m$
is strongly derived unbounded and ${\rm gl.hl} A_{3m}^m = \infty$.
It follows from Proposition \ref{prop-cf-sdu} that $A$ is strongly
derived unbounded and ${\rm gl.hl} A = \infty$, which is a
contradiction.
\end{proof}

Now we can prove Theorem II for all algebras.

\begin{theorem}
\label{thm-dBT2}A finite spectroid is either derived discrete or
strongly derived unbounded.
\end{theorem}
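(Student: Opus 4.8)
The plan is to prove the dichotomy in its contrapositive form: since it has already been observed that no algebra can be simultaneously derived discrete and strongly derived unbounded, it suffices to show that a bounded spectroid $A$ which is \emph{not} strongly derived unbounded must be derived discrete, equivalently (by the characterizations in Lemmas~\ref{lemma-dbt2-sc} and~\ref{lemma-dbt2-gentle}) that $\mbox{\rm gl.hl}\,A<\infty$. First I would record that, as in the opening of the proof of Lemma~\ref{lemma-sdu-end}, a bounded spectroid that is not strongly derived unbounded is representation-finite, and hence standard. Applying Lemma~\ref{lemma-sdu-end} directly, every endomorphism algebra $A(a,a)$ is isomorphic to $k$ or to $k[x]/(x^2)$; in particular the only loops in the quiver of $A$ are those squaring to zero, and all remaining cycles come from the ordinary part of the quiver.

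The core of the argument is a reduction to the simply connected case via covering theory. I would pass to the universal Galois covering $\pi\colon\hat A\to A$, whose Galois group $G=\Pi_1$ is free (hence torsion-free) and whose total category $\hat A$ is a simply connected, triangular, loop-free locally bounded $k$-category obtained by unwinding both the oriented cycles and the square-zero loops of $A$. The pushdown functor and its left-derived analogue form a Galois covering functor on the bounded derived categories. The strategy is then: given any finite convex subcategory $C\subseteq\hat A$, the composite of the inclusion with $\pi$ produces, by the covering/cleaving formalism in the spirit of Proposition~\ref{prop-cf-sdu}, a way of pushing indecomposable complexes over $C$ down to families of indecomposable complexes over $A$ whose cohomological ranges are controlled by two-sided linear bounds, as in Proposition~\ref{prop-der-sdu}. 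Consequently, were some finite $C$ strongly derived unbounded, so would be $A$; since $A$ is assumed not to be, every such finite simply connected $C$ is not strongly derived unbounded, whence by Lemma~\ref{lemma-dbt2-sc} it is piecewise hereditary of Dynkin type with $\mbox{\rm gl.hl}\,C<\infty$.

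Finally I would extract a uniform bound. Any indecomposable complex $X^{\bullet}\in D^b(A)$ lifts, along the covering, to an indecomposable complex supported on a finite convex subcategory $C$ of $\hat A$, and because $A$ has only finitely many $G$-orbits of objects while the covering is locally bounded, the Dynkin bounds $\mbox{\rm gl.hl}\,C$ can be taken uniformly in $C$; combined with $\mbox{\rm gl.hl}\,C<\infty$ this yields $\mbox{\rm gl.hl}\,A<\infty$, i.e.\ $A$ is derived discrete. The main obstacle I anticipate is precisely this covering step: making the Galois covering functor act cleanly on the \emph{bounded derived category}, so that it reflects indecomposability up to the $G$-action and controls both cohomological width and cohomological length, and in particular handling the square-zero loops, whose unwinding is what produces the non-trivial covering group. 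Once the derived pushdown is shown to transport strong derived unboundedness from finite simply connected pieces up to $A$, and to transport a uniform length bound back down, the dichotomy follows by assembling Lemma~\ref{lemma-dbt2-sc} with the invariance statements of Corollary~\ref{coro-equi-h} and Proposition~\ref{prop-der-sdu}.
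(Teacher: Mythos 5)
Your proposal tracks the paper's argument closely through its first two stages: using Lemma~\ref{lemma-sdu-end} to get $A(a,a)\cong k$ or $k[x]/(x^2)$ and hence standardness, passing to a Galois covering $\tilde A\to A$ with simply connected total category, and composing the inclusion of each finite convex full subspectroid $B\subseteq\tilde A$ with the covering functor to get a cleaving functor, so that Proposition~\ref{prop-cf-sdu} and Lemma~\ref{lemma-dbt2-sc} force each $B$ to be piecewise hereditary of Dynkin type. Up to that point you and the paper agree.

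The gap is in your final step. You claim that every indecomposable complex $X^{\bullet}\in D^b(A)$ \emph{lifts along the covering} to an indecomposable complex supported on a finite convex subcategory of $\tilde A$, and that the bounds $\mbox{\rm gl.hl}\,C$ can be made uniform. Neither assertion is justified, and the first is precisely the hard open issue your plan sweeps under the rug: Galois covering theory at the level of bounded derived categories (density of the derived pushdown, reflection of indecomposability up to the $G$-action) is not established anywhere in the paper and is known to be delicate in general --- the orbit category of $D^b(\tilde A)$ by $G$ need not be triangulated, let alone equivalent to $D^b(A)$, so "lifting" an arbitrary indecomposable complex is not available machinery here. The uniformity claim is also unsupported: the finite convex pieces $C$ grow without bound, and "Dynkin type" alone gives no common bound on $\mbox{\rm gl.hl}\,C$ (types $A_n$ with $n\to\infty$ would ruin it). There is in addition a smaller circularity at the outset: the equivalence "derived discrete $\Leftrightarrow \mbox{\rm gl.hl}\,A<\infty$" for a \emph{general} bounded spectroid is Proposition~\ref{prop-dd-glhl}, which the paper proves \emph{after} and \emph{by means of} Theorem~\ref{thm-dBT2}, so you may not invoke it as the target reformulation. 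The paper avoids all of this by a structural, not a lifting, argument: it upgrades "Dynkin type" to "type $\mathbf A$" via Vossieck's argument (\cite[Lemma 4.4]{Vo01}), concludes from Assem--Happel \cite{AH81} that $\tilde A$, and therefore $A$ itself, is presented by a gentle quiver, and then applies Lemma~\ref{lemma-dbt2-gentle} directly to the gentle algebra $A$, whose proof rests on the explicit Bekkert--Merklen classification rather than on any transport of complexes along the covering. Your proof would need to either reconstruct that route or supply a genuine derived covering theory, which is a substantial missing ingredient.
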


\begin{proof} Assume that a finite spectroid $A$ is not
strongly derived unbounded. Then $A$ is representation-finite. It
follows from Lemma \ref{lemma-sdu-end} that the endomorphism algebra
$A(a, a)$ is isomorphic to either $k$ or $k[x]/(x^2)$ for all $a \in
A$. Thus $A$ does not contain Riedtmann contours, and hence it is
standard \cite[Section 9]{BGRS85}.

If $A$ is simply connected then $A$ is derived discrete by Lemma
\ref{lemma-dbt2-sc}. If $A$ is not simply connected then $A$ admits
a Galois covering $\pi: \tilde{A} \rightarrow A$ with non-trivial
free Galois group $G$ such that $\tilde{A}$ is a simply connected
locally bounded spectroid \cite{BG83}, hence the filtered union of
its connected convex finite full subspectroids \cite{BG83, Ga81}.
Any connected convex finite full subspectroid $B$ of $\tilde{A}$ is
simply connected, thus $\mbox{\rm gl.dim } B < \infty$. Note that
the composition of the embedding functor $B \hookrightarrow
\tilde{A}$ and the covering functor $\pi$ is a cleaving functor. By
Proposition \ref{prop-cf-sdu}, $B$ is not strongly derived
unbounded. It follows from Lemma \ref{lemma-dbt2-sc} that $B$ is
piecewise hereditary of Dynkin type. By the same argument as that in
the proof of \cite[Lemma 4.4]{Vo01}, we obtain $B$ is piecewise
hereditary of type $\mathbb{A}$. Thus $\tilde{A}$ admits a
presentation given by a gentle quiver $(Q,I)$ (Ref.
\cite[Theorem]{AH81}), and so does $A$. Therefore, $A$ is derived
discrete by Lemma \ref{lemma-dbt2-gentle}.
\end{proof}

Next we show that derived discrete algebras can be characterized as
the algebras of finite global cohomological length. Moreover, we
summarize in the following proposition all previous results on
global finiteness of the homological invariants introduced in this
paper.

\begin{proposition}
\label{prop-dd-glhl} Let $A$ be a finite spectroid. The following
assertions hold:

{\rm (1)} $\mbox{\rm gl.hl} A < \infty$ if and only if $A$ is
derived discrete;

{\rm (2)} $\mbox{\rm gl.hw} A < \infty$ if and only if $A$ is
piecewise hereditary;

{\rm (3)} $\mbox{\rm gl.hr} A < \infty$ if and only if $A$ is
piecewise hereditary of Dynkin type.
\end{proposition}

\begin{proof}
If $A$ is derived discrete then by Vossieck's classification of
derived discrete algebras \cite[Theorem]{Vo01}, $A$ is either
piecewise hereditary of Dynkin type or derived equivalent to some
gentle algebras without generalized bands. In the case $A$ is
piecewise hereditary of Dynkin type, by Corollary \ref{coro-equi-h},
we have $\mbox{gl.hl} A < \infty$. In the other case, by Lemma
\ref{lemma-dbt2-gentle}, we have $\mbox{gl.hl} A < \infty$.

Conversely, it is enough to repeat the proof of Theorem
\ref{thm-dBT2} and replace the phrase ``not strongly derived
unbounded'' with ``of finite global cohomological length''.

The statements (2) and (3) are actually Corollary
\ref{coro-char-phere} and Theorem \ref{theorem-dbt1} respectively.
\end{proof}

\begin{remark} \label{Remark-AnotherProofOfTheoremI} {\rm
By Proposition \ref{prop-dd-glhl}, we know piecewise hereditary
algebras and derived discrete algebras can be characterized as the
algebras of finite global cohomological width and the algebras of
finite global cohomological length respectively, which provides
another proof of the first Brauer-Thrall type theorem for derived
category. Indeed, an algebra $A$ satisfies ${\rm gl.hr} A < \infty$
if and only if both ${\rm gl.hw} A < \infty$ and ${\rm gl.hl} A <
\infty$, if and only if $A$ is both piecewise hereditary and derived
discrete, i.e., piecewise hereditary of Dynkin type. }\end{remark}

We conclude this paper with a question. In \cite{Bo13}, Bongartz
proved that for a finite-dimensional algebra $A$ over an
algebraically closed field $k$, there are no gaps in the sequence of
dimensions of finite-dimensional indecomposable $A$-modules. More
precisely, if there is an indecomposable $A$-module of dimension $n
\geq 2$ then there is also one of dimension $n-1$. It is natural to
consider the derived version of the above Bongartz's theorem and ask
whether there are no gaps in the sequence of cohomological ranges of
indecomposable objects in $D^b(A)$.

\begin{question} Is there an indecomposable object in $D^b(A)$ of cohomological
range $r-1$ if there is one of cohomological range $r \geq 2$?
\end{question}

\medskip

\noindent {\footnotesize {\bf ACKNOWLEDGEMENT.} The authors thank
Xiao He and Yongyun Qin for many helpful discussions on this topic.
They are supported by the National Natural Science Foundation of
China (Grant No. 11171325 and 11571341).}

\footnotesize

\end{document}